\documentclass[sn-mathphys-num]{sn-jnl}

\usepackage{indentfirst,enumerate,amssymb,amsfonts,amsmath,amsthm,mathrsfs,dsfont}
\usepackage{color}
\usepackage{graphicx}%
\usepackage{xcolor}%
\usepackage{comment}
\usepackage{verbatim}

\numberwithin{equation}{section}

\theoremstyle{thmstyleone}
\newtheorem{theorem}{Theorem}[section]
\newtheorem{proposition}[theorem]{Proposition}
\newtheorem{corollary}[theorem]{Corollary}

\theoremstyle{thmstyletwo}
\newtheorem{example}{Example}[section]

\theoremstyle{thmstylethree}
\newtheorem{definition}{Definition}[section]
\newcommand{\C}{\mathbb{C}}

\renewcommand{\Re}{\textrm{Re}}
\renewcommand{\Im}{\textrm{Im}}

\def\jp#1{{\left\langle{#1}\right\rangle}}

\begin{document}

	\title{Global hypoellipticity and global solvability of Vekua-type operators associated with diagonal operators on compact Lie groups}




\author*[1]{\fnm{R.} \sur{Paleari da Silva}}\email{ricardo.paleari@unespar.edu.br}

\affil*[1]{\orgdiv{Colegiado de Matem\'atica}, \orgname{Unespar}, \orgaddress{\street{Rua Comendador Correa Junior, 117}, \city{Paranaguá}, \postcode{83203-560}, \state{PR}, \country{Brazil}}}



\abstract{In this paper, we study Vekua-type operators associated with diagonal operators on compact Lie groups. Characterizations of global hypoellipticity and global solvability properties are presented on classes of Vekua-type operators with constant coefficients. We also present sufficient conditions in order to get global solvability for a class of Vekua-type operators with non-constant coefficients.	
}

\keywords{Global hypoellipticity, Vekua-type operators, Compact Lie groups, Fourier Analysis, 3-dimensional spheres}

\pacs[MSC Classification]{Primary 35H10; 35A01, Secondary 30G20, 43A75}

\maketitle

\section{Introduction}

The pioneer work \cite{GW1972_pams} present, for the first time, a relation between the global hypoellipticity (which we will refer as property (GH)) of a (real) constant coefficient vector field on the torus with a Diophantine condition on the coefficient of the operator, that is, a condition that relates the growth rate to which the coefficient can be approximated by rational numbers. A little later, the works \cite{GW1973_tams} and \cite{greenfield1973globally} suggest that the existence of real (GH) vector fields on a closed manifold is a topological constraint (see also \cite{Forni08_cont-math}). A very closed concept related to the (GH) property is the global solvability of the operator, which we will refer as (GS) condition. Since then, many mathematicians started to explore the existence of globally hypoelliptic vector fields, on even more general classes of operators, in different types of manifolds or different classes of regularity. For example, the works \cite{CH1977_pams}, \cite{BP1999_jmaa} and \cite{DM2016_mana} are concerned about the global hypoellipticity and global solvability of involutive systems of vector fields. Another commom direction is the study of perturbation of vector fields (or, more generally, perturbation of operators by lower order terms), trying to understand if (and how) these perturbations may affect the properties (GH) and (GS) (see for example \cite{AK2019_jst} and \cite{KMP2021_jde}). Inspired by \cite{vekua2014generalized}, one could consider perturbations which are $\mathbb{R}$-linear but not $\mathbb{C}$-linear, for example considering operators involving terms with the complex-conjugate of the variable. The works \cite{bergamasco2014solvability} and \cite{de2021solvability} explore this direction in the case of the torus $\mathbb{T}^n$ and defined which are now called ``Vekua-type operators''. Many of these works are set in the torus $\mathbb{T}^n$, where the Fourier-Series is a powerful tool for solving equations and characterizing the regularity of solutions. However, coming back to the original works of Greenfield-Wallach, the environment of compact Lie groups is an interesting place for studying these problems, since in them we also have a very well developed theory of Fourier-Series. Despite there is a general compact manifold approach, involving strongly-invariant operators that commutes with a fixed elliptic operator (like the Laplacian associated with a Riemannian metric in a general compact oriented manifold), the Fourier-Series on a compact Lie group via Peter-Weyl's Theorem is a theory more intrinsic to the Lie group. In this way, this approach is a really good way to produce explicity examples of operators which are (GH)/(GS) (or not). Based on \cite{RT2010_book}, some works were done in this direction (see for example \cite{KMR2020_bsm}, \cite{KMR2021_jfa} and \cite{KKM2024}). Then, the first work that combines Vekua-type operators and the approach of Peter-Weyl Fourier-Series on a compact Lie group is \cite{de2022regularity}, focused on constant-coefficient operators of order 1. Very recently, the work \cite{kirilov2026solvability} generalizes some of previous works, giving examples of more classes of (GS) Vekua-type operators on general compact Lie groups. However, this work is also focused on order $1$ operators. The idea of this work is to present natural extensions of \cite{de2022regularity} and \cite{kirilov2026solvability}, proving more classes of (GH) and (GS) operators on compact Lie groups that can be of any order and explicitly constructed. The main is based on \cite{da2025diagonal}, which increases in a natural way the amount of classes of examples of operators, dealing with, essentially, Fourier-multipliers on compact Lie groups and the Peter-Weyl Fourier-Series. This work is organized as follows. On section \ref{overview} we review the basic topics about Fourier Series on compact Lie groups and how this series characterize distribution and smooth functions. On section \ref{Section_Vekua_operators} we introduce Vekua-type operators of constant coefficient associated with diagonal operators on compact Lie groups and stabilish the main results that characterize global hypoellipticity and global solvability for this class. Finally, on section \ref{nonconstant} we introduce a class of non-constant coefficient Vekua-type operators associated with diagonal operators on compact Lie groups. We present the main result of this paper, the Theorem \ref{mainthm}, which gives sufficient conditions for finding globally solvable operators on this class.

\section{Overview on Fourier Analysis in Compact Lie groups}\label{overview}

In this section, we introduce the notation and fundamental results needed for this study. A more detailed presentation of these concepts, as well as the proofs of the results discussed here, can be found in \cite{RT2010_book}.

Let $G$ be a compact Lie group, and let $\mu$ denote the normalized Haar measure on $G$. The set of continuous irreducible unitary representations of $G$ will be denoted by $\textrm{Rep}(G)$. The quotient $\widehat{G}:=\textrm{Rep}(G)/\sim$, which identify isomorphic representations, is called the unitary dual of $G$. It is well known that $\widehat{G}$ is countable. By the Peter-Weyl Theorem, there exists an orthonormal basis for $L^2(G)$, which can be constructed as follows: for each class $\Xi \in \widehat{G}$, we select a representative matrix-valued function $\xi: G \to U(d_\xi)$, where $d_\xi$ is the dimension of the representation $\xi$. Writing $\xi = (\xi_{mn})_{1 \leq m,n \leq d_\xi}$, the set
$$
\bigcup_{\Xi \in \widehat{G}} \left\{ \sqrt{d_\xi} \cdot \xi_{mn}; \ 1 \leq m,n \leq d_\xi \right\}
$$
forms an orthonormal basis for $L^2(G)$. From now on, we assume that a unique representative $\xi$ has been chosen for each class $\Xi \in \widehat{G}$, although in special cases we may impose additional properties on these representatives.

Next, we can consider the Fourier analysis on $G$ with respect to this basis. For each $f \in L^1(G)$ and $\Xi = [\xi] \in \widehat{G}$, the $\xi$-Fourier coefficient of $f$ is given by the matrix
$$
\widehat{f}(\xi) \doteq \int_G f(g) \xi(g)^\ast \, d\mu(g).
$$
This is defined up to conjugation by unitary matrices, which is sufficient for our purposes, as we are primarily concerned with estimating the Hilbert-Schmidt norm of $\widehat{f}(\xi)$. 

We denote by $C^\infty(G)$ the space of smooth functions on $G$, equipped with the standard topology of uniform convergence for functions and their derivatives. The space of distributions on $G$ is denoted by $\mathcal{D}'(G)$, which is the topological dual of $C^\infty(G)$.

Let $\Delta$ be the Laplace-Beltrami operator on $G$. For each $\Xi = [\xi] \in \widehat{G}$, the matrix entries $\xi_{mn}$ are eigenfunctions of $\Delta$, all corresponding to the same eigenvalue $\lambda(\xi) \leq 0$. The operator $(I - \Delta)^{1/2}$ is positive definite, and we denote its eigenvalue corresponding to $\Xi = [\xi]$ by $\jp{\xi} \doteq (1 + \lambda(\xi))^{1/2}$.

For each $u \in \mathcal{D}'(G)$ and $\Xi = [\xi] \in \widehat{G}$, the $\xi$-Fourier coefficient of $u$ is defined by
$$
\widehat{u}(\xi) \doteq \langle u, \xi^\ast \rangle.
$$
Again, this is well-defined up to conjugation by a unitary matrix.

The Peter-Weyl basis allows us to characterize distributions and different regularity classes of functions through the behavior of their Fourier coefficients. For instance, suppose that for each $\Xi = [\xi] \in \widehat{G}$, we can associate a matrix $(x(\xi)_{mn}) \in \C^{d_\xi \times d_\xi}$, and that there exist constants $M > 0$ and $N > 0$ such that
\begin{equation}\label{dist}
	\big|x(\xi)_{mn} \big| \leq M \jp{\xi}^{N}
\end{equation}
for all $1 \leq m,n \leq d_\xi$. Then, the series
$$
u \doteq \sum_{\Xi = [\xi] \in \widehat{G}} d_\xi \sum_{m,n} x(\xi)_{mn} \xi_{nm}
$$
converges in $\mathcal{D}'(G)$ and defines a distribution $u$ such that $\widehat{u}(\xi)_{mn} = x(\xi)_{mn}$ for all $1 \leq m,n \leq d_\xi$, and $\Xi = [\xi] \in \widehat{G}$.

Conversely, if $u \in \mathcal{D}'(G)$, then there exist constants $M > 0$ and $N > 0$ such that
$$
\big|\widehat{u}(\xi)_{mn}\big| \leq M \jp{\xi}^{N}
$$
for all $1 \leq m,n \leq d_\xi$ and $\Xi = [\xi] \in \widehat{G}$.

Regarding smooth functions, a distribution $u \in \mathcal{D}'(G)$ is a smooth function if and only if, for every $N > 0$, there exists a constant $M > 0$ such that
\begin{equation}\label{smooth1}
	\big|\widehat{u}(\xi)_{mn} \big| \leq M \jp{\xi}^{-N}
\end{equation}
for all $1 \leq m,n \leq d_\xi$ and $\Xi = [\xi] \in \widehat{G}$. In this case, the corresponding Fourier series converges in the $L^2(G)$-norm, and the Plancherel formula holds.

For a continuous operator $P: \mathcal{D}'(G) \to \mathcal{D}'(G)$, the symbol of $P$ is defined at $g \in G$ and $\Xi = [\xi] \in \widehat{G}$ by the matrix
$$
\sigma_P(g,\xi) \doteq \xi(g)^\ast (P\xi)(g),
$$
where $P\xi$ is the matrix $(P\xi)_{mn} \doteq P(\xi_{mn})$ for $1 \leq m,n \leq d_\xi$. This is also defined up to conjugation by a unitary matrix. In the special case where $P$ is left-invariant (i.e., it commutes with left translations), $\sigma_P$ does not depend on $g \in G$, and for all $u \in \mathcal{D}'(G)$ and $\Xi = [\xi] \in \widehat{G}$, the following formula holds:

\begin{equation} \label{multiply}
\widehat{Pu}(\xi) = \sigma_P(\xi)\cdot \widehat{u}(\xi).
\end{equation} 

If $u \in \mathcal{D}'(G)$, we define $\overline{u}$ by the pairing:

$$\jp{\overline{u},\varphi}:= \overline{ \jp{u, \overline{\varphi}}}$$
for each $\varphi \in C^\infty(G)$. It is easy to see that $\overline{u} \in \mathcal{D}'(G)$. 

If $\xi \in \textrm{Rep}(G)$ , then $\overline{\xi}(g):=\overline{\xi(g)}$, $g \in G$, also defines an element of $\textrm{Rep}(G)$, called the conjugated representation of $\xi$. It is clear that $d_{\overline{\xi}}=d_{\xi}$.  Besides,

$$\widehat{\overline{u}}(\xi)_{k\ell}= \jp{\overline{u}, \overline{\xi}_{\ell k}} = \overline{  \jp{u, \xi_{\ell k}}}=\overline{\jp{u, ((\overline{\xi})^*)_{k \ell}}}=\overline{\widehat{u}(\overline{\xi})_{k\ell}}$$
for all $1 \leq k,\ell \leq d_\xi$, so

\begin{equation}\label{conj}
\widehat{\overline{u}}(\xi) = \overline{ \widehat{u}(\overline{\xi})}
\end{equation}
for all $[\xi] \in \widehat{G}$.

\section{Vekua-type operators}\label{Section_Vekua_operators}
Let $G$ be a compact Lie group and $L: \mathcal{D}'(G) \rightarrow \mathcal{D}'(G)$ be a left-invariant continuous operator such that $L(C^\infty(G)) \subset C^\infty(G)$. We also consider $p,q \in \mathbb{C}$, $p \neq 0$, and define the operator $P: \mathcal{D}'(G) \rightarrow \mathcal{D}'(G)$ by
\begin{equation}\label{vekua_op} 
Pu:= Lu -q\cdot u - p \cdot \overline{u},
\end{equation} 
for each $u \in \mathcal{D}'(G)$. We will call the operator $P$ as a Vekua-type operator associated with $L$. Note that $P$ is $\mathbb{R}$-linear but not $\mathbb{C}$-linear. 

We are concerned about regularization properties of $P$, in the sense of the following definition.

\begin{definition} We say that a continuous operator $P: \mathcal{D}'(G) \rightarrow \mathcal{D}'(G)$ which preserves smooth functions is globally hypoelliptic (or just (GH)) if the conditions $u \in \mathcal{D}'(G)$ and $Pu \in C^\infty(G)$ always imply $u \in C^\infty(G)$.
\end{definition}

For a certain class of operators $L$, this work will present a characterization for global hypoellipticity of the Vekua-type operator defined in (\ref{vekua_op}) in terms of certain ``Diophantine-condition'' and conditions about the coefficients $p,q$. 

Suppose that $u,f \in \mathcal{D}'(G)$ are such that $Pu=f$. By property (\ref{multiply}), for all $[\xi] \in \widehat{G}$ we have:
$$\widehat{Pu}(\xi) = \widehat{Lu}(\xi)-q \cdot \widehat{u}(\xi)-p \cdot \widehat{\overline{u}}(\xi)=(\sigma_L(\xi)-q. \textrm{Id})\cdot \widehat{u}(\xi)-p \cdot \widehat{\overline{u}}(\xi)=\widehat{f}(\xi).$$
Now, applying the same idea for the representation $\overline{\xi}$, by property (\ref{conj}) we have
$$\widehat{Pu}(\overline{\xi})=(\sigma_L(\overline{\xi})-q \cdot \textrm{Id}) \cdot \widehat{u}(\overline{\xi}) - p \cdot \widehat{ \overline{u}}(\overline{\xi}) =(\sigma_L(\overline{\xi})-q \cdot \textrm{Id}) \cdot \overline{ \widehat{ \overline{u}}(\xi)}-p \cdot \overline{ \widehat{u}(\xi)} = \widehat{f}(\overline{\xi})=\overline{ \widehat{\overline{f}}(\xi)}. $$
By taking the complex conjugate on the last equation above we get
$$-\overline{p} \cdot \widehat{u}(\xi) + (\overline{\sigma_L(\overline{\xi})}-\overline{q} \cdot \textrm{Id}) \cdot \widehat{ \overline{u}}(\xi) = \widehat{\overline{f}}(\xi).$$  
In this way, for all $[\xi] \in \widehat{G}$ we get a system on the matrix coefficients $\widehat{u}(\xi)$ and $\widehat{\overline{u}}(\xi)$ given by
\begin{equation}\label{mainsystem}
	\left \{ \begin{array}{c} (\sigma_L(\xi)-q. \textrm{Id})\cdot \widehat{u}(\xi)-p \cdot \widehat{\overline{u}}(\xi)=\widehat{f}(\xi) \\ -\overline{p} \cdot \widehat{u}(\xi) + (\overline{\sigma_L(\overline{\xi})}-\overline{q} \cdot \textrm{Id}) \cdot \widehat{ \overline{u}}(\xi) = \widehat{\overline{f}}(\xi)   \end{array}   \right.
\end{equation}

Now, we will restrict ourselves to a particular class of operators $L$ in a way we can actually treat the system (\ref{mainsystem}) above.

\begin{definition}\label{diagonal}
	We say that a continuous left-invariant operator $L: \mathcal{D}'(G) \rightarrow \mathcal{D}'(G)$ which preserves smooth functions is diagonal if the following conditions holds:
	\begin{itemize}
		\item[a)] for each $\Xi \in \widehat{G}$ there is a matrix representative $\xi \in \Xi$ such that $\sigma_L(\xi)$ is a diagonal matrix and that $\sigma_L(\overline{\xi})=\overline{\sigma_L(\xi)}$. In this case, we will denote the entries of this matrix $\sigma_L(\xi)$ by 
	$\sigma_L(\xi)=\textrm{diag}(\sigma_1(\xi),...,\sigma_{d_\xi}(\xi))$. 
		\item[b)] There are constants $M>0$ and $K \in \mathbb{N}$ such that
		\begin{equation}\label{condsimbolo}
		|\sigma_m(\xi)| \leq C \jp{\xi}^K
		\end{equation}
		for all $[\xi] \in \widehat{G}$ and $1 \leq m \leq d_\xi$.
	\end{itemize}
\end{definition} 

\begin{example} Let $k,n \in \mathbb{N}$ and $\{X_1,...,X_n\}$ be a finite family of (real) left-invariant vector fields on a compact Lie group $G$. For each multi-index $\alpha=(\alpha_1,...,\alpha_n) \in \mathbb{N}_0^n$ we define the operator $X^\alpha:= X_1^{\alpha_1} \circ ... \circ X_n^{\alpha_n}$. If for each $\alpha \in \mathbb{N}^n$, with $1 \leq |\alpha| \leq k$, a complex number $a_\alpha$ is given, then 
	$$L: = \sum_{|\alpha| \leq k} a_\alpha X^\alpha,$$
defines a continuous left-invariant operator on $\mathcal{D}'(G)$ which preserves smooth functions on $G$. Moreover, since $k \geq 1$, it is easy to see that $\sigma_{X_j}(\overline{\xi}) = \overline{\sigma_{X_j}(\xi)}$ for all $j=1,...,n$, which implies that $\sigma_{L}(\overline{\xi}) = \overline{\sigma_L(\xi)}$ for all $[\xi] \in \widehat{G}$.

	It is well known that $i \cdot X_j$ is a symmetric operator on $L^2(G)$ for each $j=1,...,n$. In particular, for each $\Xi \in \widehat{G}$ there is a matrix representative $\xi \in \Xi$ such that $\sigma_{X_j}(\xi)=\textrm{diag}(i\cdot \mu_1^j(\xi),...,i \cdot \mu_{d_\xi}^j(\xi))$, with $\mu_\ell^j(\xi) \in \mathbb{R}$ for all $\ell=1,...,d_\xi$. In this way, if $n=1$, then $L$ is a diagonal operator. If $n>1$ and all the vector fields $X_j$ commute with each other, then $L$ is a diagonal operator.   
\end{example}

Now suppose that $L$ is a diagonal operator on $G$, $p,q \in \mathbb{C}$, $p \neq 0$, and let $P$ be the corresponding Vekua-type operator as defined in (\ref{vekua_op}). Once for all, for each $\Xi \in \widehat{G}$ assume that we choose $\xi \in \Xi$ such that $\sigma_L(\xi)$ have the properties as in (\ref{diagonal}). In this way, for all $[\xi] \in \widehat{G}$ and $1 \leq k,\ell \leq d_\xi$, we have
 
\begin{equation}\label{subsystem}
	\left \{ \begin{array}{c} (\sigma_k(\xi)-q)\cdot \widehat{u}(\xi)_{k\ell}-p \cdot \widehat{\overline{u}}(\xi)_{k\ell}=\widehat{f}(\xi)_{k\ell} \\ -\overline{p} \cdot \widehat{u}(\xi)_{k\ell} + (\sigma_k(\xi)-\overline{q} ) \cdot \widehat{ \overline{u}}(\xi)_{k\ell} = \widehat{\overline{f}}(\xi)_{k\ell}   \end{array}   \right.
\end{equation}

The discrimant of this system will by denote by $\Delta_k(\xi)$, and is given by
\begin{equation}\label{discriminant} 
\Delta_k(\xi)=(\sigma_k(\xi)-q)\cdot (\sigma_k(\xi)-\overline{q})-|p|^2=\sigma_k(\xi)^2-2\sigma_k(\xi)\Re(q)+|q|^2-|p|^2
\end{equation} 

By Cramer's rule, it follows that
\begin{equation}\label{cramer}
\Delta_k(\xi) \cdot \widehat{u}(\xi)_{k\ell} =(\sigma_k(\xi)-\overline{q})\cdot \widehat{f}(\xi)_{k\ell} +p \cdot \widehat{\overline{f}}(\xi)_{k\ell}. 
\end{equation}

Here we assume that $L$ is an operator associated with a symbol $\sigma_L$ of order $m>0$. In this way, there is a constant $C_\sigma>0$ such that
$$|\sigma_k(\xi)| \leq C_\sigma \cdot \jp{\xi}^m$$
for all $[\xi] \in \widehat{G}$ and $1 \leq k \leq d_\xi$.

Next, we introduce a condition on $\Delta_k(\xi)$ that recalls Diophantine condition like in works .... (fazer referências).

\begin{definition}\label{DC} Let $L$ be a diagonal operator, $p,q \in \mathbb{C}$, $p \neq 0$, and $P$ the associated Vekua-type operator. We say that $P$ satisfy the condition (DC) if there exists $C>0$ and $M>0$ such that
	$$|\Delta(\xi)_k| \geq C \cdot \jp{\xi}^{-M}.$$
for all $[\xi] \in \widehat{G}$ with $\jp{\xi} \geq M$ and $1\leq k \leq d_\xi$. 
\end{definition}

\begin{proposition}\label{firstprop} If $P$ satisfy the condition (DC), then $P$ is (GH).
\end{proposition}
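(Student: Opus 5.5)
The plan is to work coefficient by coefficient with the Cramer identity (\ref{cramer}) and to verify the smoothness criterion (\ref{smooth1}). Let $u \in \mathcal{D}'(G)$ with $f := Pu \in C^\infty(G)$. Since $f$ is smooth, so is $\overline{f}$. By (\ref{smooth1}), for every $N>0$ there is $M_N>0$ with
$$|\widehat{f}(\xi)_{k\ell}| \leq M_N \jp{\xi}^{-N}, \qquad |\widehat{\overline{f}}(\xi)_{k\ell}| \leq M_N \jp{\xi}^{-N}$$
for all $[\xi]\in\widehat{G}$ and $1\leq k,\ell\leq d_\xi$. The $\overline{f}$ bound uses (\ref{conj}), together with the facts that $\jp{\overline{\xi}}=\jp{\xi}$ (the matrix entries of $\overline{\xi}$ are conjugates of eigenfunctions of the real operator $\Delta$) and that conjugation preserves smoothness. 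All computations are done with the representatives fixed in Definition \ref{diagonal}, so that (\ref{subsystem}) and (\ref{cramer}) hold entrywise.

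Next I would bound the right-hand side of (\ref{cramer}). Using the symbol bound $|\sigma_k(\xi)|\leq C_\sigma\jp{\xi}^m$ and $\jp{\xi}\geq 1$, we get
$$|(\sigma_k(\xi)-\overline{q})\widehat{f}(\xi)_{k\ell} + p\,\widehat{\overline{f}}(\xi)_{k\ell}| \leq (C_\sigma + |q| + |p|)\, M_N \jp{\xi}^{m-N}.$$
Now take $[\xi]$ with $\jp{\xi}\geq M$. Condition (DC) gives $|\Delta_k(\xi)|\geq C\jp{\xi}^{-M}\neq 0$, so we can divide in (\ref{cramer}) and obtain
$$|\widehat{u}(\xi)_{k\ell}| \leq C^{-1}(C_\sigma+|q|+|p|)M_N \jp{\xi}^{M+m-N}.$$
Since $N$ is arbitrary, replacing $N$ by $N+M+m$ gives decay of every order on this range.

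For the remaining $[\xi]$ with $\jp{\xi}<M$, the point is that there are only finitely many such classes. This is because $\jp{\xi}^2 = 1+\lambda(\xi)$ runs through the eigenvalues of $I-\Delta$, which have finite multiplicity, and $\widehat{G}$ is exhausted by these eigenspaces. On these finitely many classes the coefficients $\widehat{u}(\xi)_{k\ell}$ are finitely many complex numbers, and $\jp{\xi}^N<M^N$ there, so they are bounded by a constant times $\jp{\xi}^{-N}$. Enlarging the constant from the previous step covers all $[\xi]$, and (\ref{smooth1}) yields $u\in C^\infty(G)$.

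The main obstacle is this finiteness/low-frequency step, together with the fact that $\Delta_k(\xi)$ may vanish there, so (\ref{cramer}) cannot be inverted. This does not matter, because no estimate is needed on finitely many frequencies beyond the trivial one. The other point to check carefully is that the representatives chosen in Definition \ref{diagonal} are compatible with conjugation, i.e.\ that $\sigma_L(\overline{\xi})=\overline{\sigma_L(\xi)}$ holds for the chosen representatives, so that (\ref{subsystem}) is valid. I would take this directly from Definition \ref{diagonal} as the paper does.
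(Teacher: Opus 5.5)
Your proposal is correct and follows essentially the same route as the paper: it inverts the Cramer identity (\ref{cramer}) on the range $\jp{\xi}\geq M$ using (DC) and the rapid decay of $\widehat{f}$ and $\widehat{\overline{f}}$, and absorbs the finitely many low frequencies into the constant. The differences are cosmetic: you shift $N$ to $N+M+m$ after the estimate instead of building it into the decay exponents beforehand, and you explicitly justify that $\{[\xi]:\jp{\xi}<M\}$ is finite, which the paper uses tacitly when it defines $C_1(N)$ as a maximum.
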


\begin{proof} Let $C>0$ and $M>0$ be the constants from the definition of the condition (DC). Suppose that $f \in C^\infty(G)$ and $u \in \mathcal{D}'(G)$ are such that $Pu=f$. We want to show that $u \in C^\infty(G)$. Fix $N>0$. By defining
	$$C_1(N):= \max \{|\widehat{u}(\xi)_{k\ell}|\cdot \jp{\xi}^N; \jp{\xi}\leq M, 1 \leq k,\ell \leq d_\xi\} \geq 0,$$
we have
$$|\widehat{u}(\xi)_{k\ell}| \leq C_1(N)\cdot \jp{\xi}^{-N}$$
for all $[\xi] \in \widehat{G}$ with $\jp{\xi} \leq M$ and $1 \leq k,\ell \leq d_\xi$.	
	
Since $f \in C^\infty(G)$, we also have $\overline{f} \in C^\infty(G)$, and then, there are constants $C_2(N)>0$ and $C_3(N)>0$ such that
$$|\widehat{f}(\xi)_{k\ell} | \leq C_2(N)\cdot \jp{\xi}^{-N-m-M}$$
and
$$|\widehat{ \overline{f}}(\xi)_{k\ell}| \leq C_3(N) \cdot \jp{\xi}^{-N-M}$$
for all $[\xi] \in \widehat{G}$, $1 \leq k,\ell \leq d_\xi$. So for any $[\xi] \in \widehat{G}$ with $\jp{\xi}\geq M$ and $1 \leq k,\ell \leq d_\xi$, we have 
\begin{align} 
	|\widehat{u}(\xi)_{k\ell}| & \leq  \frac{1}{|\Delta_k(\xi)|} \cdot \left[(|\sigma_k(\xi)|+|q|)|\widehat{f}(\xi)_{k\ell}|+ |p|| \widehat{\overline{f}}(\xi)_{k\ell}| \right]  \nonumber \\
	& \leq  \frac{1}{C}\jp{\xi}^M \left [ (C_\sigma \jp{\xi}^m+|q|)C_2(N) \jp{\xi}^{-N-M-m}+|p|C_3(N) \jp{\xi}^{-N-M}  \right ] \nonumber \\
	&  \leq \frac{C_\sigma\cdot C_2(N)}{C}\jp{\xi}^{-N}+ \frac{|q|C_2(N)}{C}\jp{\xi}^{-N-m}+\frac{|p|C_3(N)}{C}\jp{\xi}^{-N} \nonumber \\
	& \leq \frac{1}{C}\left[ C_\sigma \cdot C_2(N)+|q|\cdot C_2(N)+|p|\cdot C_3(N)\right]\cdot \jp{\xi}^{-N}. 
\end{align} 
Now, if $C(N):= \max  \{ 1, (C_\sigma \cdot C_2(N)+|q|\cdot C_2(N)+|p|\cdot C_3(N))/C\}$, then $C(N)>0$ depends only on $N$ and
$$|\widehat{u}(\xi)_{k\ell}| \leq C(N)\cdot \jp{\xi}^{-N}$$
for all $[\xi] \in \widehat{G}$ and $1 \leq k,\ell \leq d_\xi$, which guarantees that $u \in C^\infty(G)$.

\end{proof}

Note that if the condition (DC) holds, in particular $\Delta_k(\xi)$ vanish for at most a finite number of representations $\xi$ and $k$ such that $1 \leq k \leq d_\xi$. We will see that if the representations of the group are never self-dual, that is $[\xi] \neq [\overline{\xi}]$ for all $\xi  \in \textrm{Rep}(G)$, then this is a necessary condition for (GH) to hold. To see this, first let us introduce the following set:

\begin{equation}\label{setZ} 
	\mathcal{Z}=\{[\xi] \in \widehat{G}; \Delta_k(\xi)=0 \textrm{ for some } 1\leq k \leq d_\xi\}.
\end{equation}

\begin{proposition}\label{Zinfinite} Suppose that $[\xi] \neq [\overline{\xi}]$ for all $\xi \in \textrm{Rep}(G)$. If the set $\mathcal{Z}$ is infinite, then $P$ is not (GH).	
\end{proposition}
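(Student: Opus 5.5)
The idea is to build a distribution $u \notin C^\infty(G)$ with $Pu = 0$, which is smooth. Since $Pu$ is determined by its Fourier coefficients, it is enough to solve, for every $[\eta]\in\widehat{G}$,
$$(\sigma_L(\eta)-q)\,\widehat{u}(\eta)-p\,\widehat{\overline{u}}(\eta)=0 .$$
Here $\widehat{\overline{u}}(\eta)=\overline{\widehat{u}(\overline{\eta})}$ by (\ref{conj}). The hypothesis $[\xi]\neq[\overline{\xi}]$ is what lets us prescribe $\widehat{u}(\xi)$ and $\widehat{u}(\overline{\xi})$ independently. Prescribing $\widehat{u}(\overline{\xi})$ amounts to prescribing $\widehat{\overline{u}}(\xi)$. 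So we may realize any pair $(a,b)$ solving the homogeneous version of (\ref{subsystem}).

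\textbf{Choice of representations.} Since $\mathcal{Z}$ is infinite and each class has only one conjugate class, I choose greedily a sequence $[\xi_j]\in\mathcal{Z}$ such that the classes $[\xi_j]$ and $[\overline{\xi_j}]$, $j\in\mathbb{N}$, are pairwise distinct. For each $j$ fix $k_j$ with $\Delta_{k_j}(\xi_j)=0$. I would also note that the representative of $[\overline{\xi_j}]$ can be taken to be $\overline{\xi_j}$, with diagonal symbol $\overline{\sigma_L(\xi_j)}$. This is compatible with Definition \ref{diagonal} a). I expect this bookkeeping to be the main point requiring care.

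\textbf{Construction of $u$.} Set
$$a_j=1,\qquad b_j=\frac{\sigma_{k_j}(\xi_j)-q}{p},$$
which is possible since $p\neq 0$. This pair solves the first equation of (\ref{subsystem}) with zero right-hand side. Because the discriminant vanishes, the second equation, $-\overline{p}\,a_j+(\sigma_{k_j}(\xi_j)-\overline{q})\,b_j=0$, follows automatically: it equals $\Delta_{k_j}(\xi_j)/p=0$. Define
$$\widehat{u}(\xi_j)_{k_j1}=a_j,\qquad \widehat{u}(\overline{\xi_j})_{k_j1}=\overline{b_j},$$
with all other Fourier coefficients equal to zero. Then $\widehat{\overline{u}}(\xi_j)_{k_j1}=b_j$.

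\textbf{Verification.} By (\ref{condsimbolo}), $|b_j|\le (C\jp{\xi_j}^K+|q|)/|p|$, so the coefficients have polynomial growth and (\ref{dist}) gives $u\in\mathcal{D}'(G)$. Since $|\widehat{u}(\xi_j)_{k_j1}|=1$ for infinitely many classes, and $\jp{\xi_j}\to\infty$ by countability and discreteness of eigenvalues, (\ref{smooth1}) fails and $u\notin C^\infty(G)$.

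Next I check $\widehat{Pu}(\eta)=0$ for every $\eta$.
\begin{itemize}
\item If $\eta\notin\{\xi_j,\overline{\xi_j}\}$, both $\widehat{u}(\eta)$ and $\widehat{\overline{u}}(\eta)=\overline{\widehat{u}(\overline{\eta})}$ vanish.
\item If $\eta=\xi_j$, the only nonzero entries are in position $(k_j,1)$, and the first equation of (\ref{subsystem}) holds there by the choice of $b_j$.
\item If $\eta=\overline{\xi_j}$, the symbol is $\overline{\sigma_L(\xi_j)}$ and $\widehat{\overline{u}}(\overline{\xi_j})=\overline{\widehat{u}(\xi_j)}$. The required identity $(\overline{\sigma_{k_j}(\xi_j)}-q)\,\overline{b_j}-p\cdot 1=0$ is the complex conjugate of the second equation of (\ref{subsystem}), which holds.
\end{itemize}
Hence $Pu=0\in C^\infty(G)$ with $u\notin C^\infty(G)$, so $P$ is not (GH).
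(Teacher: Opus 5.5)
Your proposal is correct and follows essentially the same route as the paper. Both arguments choose infinitely many classes $[\xi_j]\in\mathcal{Z}$ with all $[\xi_j],[\overline{\xi_j}]$ pairwise distinct, then place a nonzero solution of the homogeneous system (\ref{subsystem}) in a single entry of $\widehat{u}(\xi_j)$ and $\widehat{u}(\overline{\xi_j})$. The differences are cosmetic: the paper sets $\widehat{u}(\xi_n)_{k_nk_n}=\sigma_{k_n}(\xi_n)-\overline{q}$ and $\widehat{u}(\overline{\xi_n})_{k_nk_n}=p$, while you normalize to $\widehat{u}(\xi_j)_{k_j1}=1$ in the entry $(k_j,1)$, and your check at $\overline{\xi_j}$ by conjugating the second equation is, if anything, cleaner than the paper's.
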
  

\begin{proof} Suppose that $\mathcal{Z}$ is infinite. So, for each $n \in \mathbb{N}$ there exists $[\xi_n] \in \widehat{G}$ and $1 \leq k_n \leq d_{\xi_n}$ such that $\Delta_{k_n}(\xi_n)=0$. We can also suppose that $[\xi_n] \neq [{\xi_{\tilde{n}}}] $ and 
$[\xi_n] \neq [\overline{\xi_{\tilde{n}}}] $ for all $n \neq \tilde{n}$. Define the sequence of matrices $(x(\xi))_{[\xi] \in \widehat{G}}$ given by
$$
x(\xi)_{k\ell} = \left \{ \begin{array}{ccc} \sigma_{k_n}(\xi_n)-\overline{q} & \textrm{if} & [\xi]=[\xi_n] \textrm{ and } k=\ell=k_n \\ p & \textrm{if} & [\xi]=[\overline{\xi_n}] \textrm{ and } k=\ell=k_n \\ 0 &  & \textrm{otherwise}. \end{array}   \right.
$$
This sequence defines a distribution $u \in \mathcal{D}'(G)$. In fact, for all $n$ we have
$$|x(\xi_n)_{k_n k_n}| = |\sigma_{n_k}(\xi_n)-\overline{q}|\leq C_\sigma \jp{\xi_n}^m+|q| \leq (C_\sigma+|q|)\jp{\xi_n}^m $$
and
$$|x(\overline{\xi_n})_{k_n k_n}| \leq |p| < |p| \jp{\overline{\xi_n}}^m,$$
so, if we define $C:=\max\{C_\sigma+|q|,|p|\}>0$, then 
$$|x(\xi)_{k\ell}| \leq C\cdot \jp{\xi}^m$$
for all $[\xi]\in \widehat{G}$ and $1 \leq k,\ell \leq d_\xi$. More than that, $u$ is not a smooth function because $|\widehat{u}(\overline{\xi_n})_{k_n k_n}| = |p| \neq 0$ for all $n \in \mathbb{N}$, so the Fourier coefficients of the distribution $u$ do not decay. We claim that $Pu=0$. Since $\widehat{u}(\xi) =0$ if $[\xi] \notin \{[\xi_n],[\overline{\xi_n}], n \in \mathbb{N}\}$, it is enough to verify that $\widehat{Pu}([\xi_n])=0$ and $\widehat{Pu}([\overline{\xi_n}])=0$ for all $n \in \mathbb{N}$. By (\ref{subsystem}) we have
\begin{eqnarray*} \widehat{Pu}(\xi_n)_{k_n k_n} & = & (\sigma_{k_n}(\xi_n)-q) \cdot \widehat{u}(\xi_n)_{k_n k_n}- p \cdot \overline{\widehat{u}(\overline{\xi_n})_{k_n k_n}} \\
	& = & (\sigma_{k_n}(\xi_n)-q)\cdot (\sigma_{k_n}(\xi_n)-\overline{q})- p \cdot \overline{p} \\
	& = & \Delta_{k_n}(\xi_n) \\
	& = & 0, 
\end{eqnarray*} 
and
\begin{eqnarray*}
	\widehat{Pu}(\overline{\xi_n})_{k_n k_n} & = & -\overline{p}\cdot \widehat{u}(\overline{\xi_n})_{k_n k_n} + (\sigma_{k_n}(\overline{\xi_n})-\overline{q})\cdot \widehat{ \overline{u}}(\overline{\xi_n})_{k_n k_n} \\ 
	& = & -\overline{p} \cdot p +(\overline{\sigma_{k_n}(\xi_n)}-\overline{q}) \cdot \overline{ \widehat{u}(\xi_n)} \\
	& = & -|p|^2+ (\overline{\sigma_{k_n}(\xi_n)-q})\cdot (\overline{ \sigma_{k_n}(\xi_n)-\overline{q}}) \\
	& = & \overline{ \Delta_{k_n}(\xi_n)} \\
	& = & 0
\end{eqnarray*}
for all $n\in \mathbb{N}$. So $u \in \mathcal{D}'(G) \setminus C^\infty(G)$ satisfies $Pu=0 \in C^\infty(G)$, which concludes that $P$ is not (GH).
\end{proof}

Next, we want to verify that under the hypothesis that every element $\xi \in \textrm{Rep}(G)$ is not self-dual, the condition (DC) is necessary for (GH).

\begin{proposition} Suppose that $[\xi] \neq [\overline{\xi}]$ for all $\xi \in \textrm{Rep}(G)$. If the Vekua-type operator $P$ is (GH), then the condition (DC) holds.
\end{proposition}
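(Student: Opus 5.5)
We argue by contraposition: assuming (DC) fails, we build $u \in \mathcal{D}'(G)\setminus C^\infty(G)$ with $Pu \in C^\infty(G)$. By Proposition \ref{Zinfinite} we may assume $\mathcal{Z}$ is finite, since otherwise $P$ is already not (GH). The negation of (DC), applied with $C=1$ and $M=n$, gives for each $n\in\mathbb{N}$ some $[\xi_n]\in\widehat{G}$ with $\jp{\xi_n}\geq n$ and some $1\leq k_n\leq d_{\xi_n}$ such that $|\Delta_{k_n}(\xi_n)| < \jp{\xi_n}^{-n}$. Because $\jp{\xi_n}\to\infty$ and $\mathcal{Z}$ is finite, we can pass to a subsequence so that the classes $[\xi_n]$ are pairwise distinct. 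Using $[\xi]\neq[\overline{\xi}]$, we further arrange $[\xi_n]\neq[\overline{\xi_{\tilde n}}]$ for all $n,\tilde n$, and also $\Delta_{k_n}(\xi_n)\neq 0$. For the first condition, each class is excluded by at most one other index, so we can extract greedily, using that $\jp{\overline{\xi}}=\jp{\xi}$.

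We mimic the construction in Proposition \ref{Zinfinite}, but normalise it so that the coefficients do not decay. Set $a_n:=\sigma_{k_n}(\xi_n)-\overline{q}$ and put $x(\xi_n)_{k_nk_n}=a_n$, $x(\overline{\xi_n})_{k_nk_n}=p$, and all other entries zero. These bounds are the same as in Proposition \ref{Zinfinite}, so $u\in\mathcal{D}'(G)$. Moreover $|\widehat{u}(\overline{\xi_n})_{k_nk_n}|=|p|\neq 0$ for infinitely many classes, so $u\notin C^\infty(G)$.

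Next we compute $Pu$ using (\ref{subsystem}) exactly as in Proposition \ref{Zinfinite}:
\[
\widehat{Pu}(\xi_n)_{k_nk_n}=\Delta_{k_n}(\xi_n),\qquad \widehat{Pu}(\overline{\xi_n})_{k_nk_n}=\overline{\Delta_{k_n}(\xi_n)}.
\]
All other Fourier coefficients of $Pu$ vanish. This uses that the symbol of $L$ is diagonal, so off-diagonal entries remain zero, and that the chosen classes are distinct from each other and from their conjugates. Hence $|\widehat{Pu}(\eta)_{k\ell}|\leq \jp{\eta}^{-n}$ on the $n$-th pair of classes, since $\jp{\overline{\xi_n}}=\jp{\xi_n}$. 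Given $N>0$, for $n\geq N$ this is at most $\jp{\eta}^{-N}$, and the finitely many remaining terms are absorbed into a constant. By (\ref{smooth1}), $Pu\in C^\infty(G)$, which contradicts (GH).

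The main obstacle is the bookkeeping in the extraction step. We must check that the subsequence can simultaneously avoid repetitions and conjugate coincidences, so that the two equations of (\ref{subsystem}) decouple exactly as in Proposition \ref{Zinfinite}. This is the place where the hypothesis $[\xi]\neq[\overline{\xi}]$ is used. If $[\xi_n]=[\overline{\xi_n}]$ were allowed, the prescriptions $a_n$ and $p$ would land in the same coefficient slot and the construction would break.
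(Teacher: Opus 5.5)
Your argument is correct, and it is simpler than the paper's.

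\textbf{The paper's route.} The paper builds a smooth $f$ whose only nonzero coefficients are $\Delta_{k_n}(\xi_n)$ at $(\xi_n,k_nk_n)$ and $\overline{\Delta_{k_n}(\xi_n)}$ at $(\overline{\xi_n},k_nk_n)$. For each $c\neq 0$ it exhibits a solution $u_c$ of $Pu_c=cf$ with coefficients $c(\sigma_{k_n}(\xi_n)-w_c)$ and $c(\overline{\sigma_{k_n}(\xi_n)}-w_c)$, where $w_c=\overline{q}-p\overline{c}/c$. These coefficients could a priori tend to zero, so non-smoothness of $u_c$ is not automatic. The paper observes that smoothness of $u_c$ forces $w_c\in\mathbb{R}$. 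It then runs a three-case analysis on $p,q$ to pick $c\in\{1,i,1+ip\}$ with $w_c\notin\mathbb{R}$.

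\textbf{Your route.} You reuse the near-kernel element of Proposition \ref{Zinfinite}. The constant coefficient $p$ at $\overline{\xi_n}$ makes $u\notin C^\infty(G)$ immediate, and all the smallness of $\Delta_{k_n}(\xi_n)$ goes into $Pu$. This removes the parameter $c$ and the case analysis. It also never divides by $\Delta_{k_n}(\xi_n)$, so you do not need to reduce to finite $\mathcal{Z}$ via Proposition \ref{Zinfinite}. In fact your construction covers Proposition \ref{Zinfinite} as well, since an infinite $\mathcal{Z}$ already violates (DC).

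\textbf{A small correction.} This does not affect your conclusion. Use the first line of (\ref{mainsystem}) at $\overline{\xi_n}$, together with $\widehat{\overline{u}}(\eta)=\overline{\widehat{u}(\overline{\eta})}$ and $\sigma_{k_n}(\overline{\xi_n})=\overline{\sigma_{k_n}(\xi_n)}$. This gives
\[
\widehat{Pu}(\overline{\xi_n})_{k_nk_n}=(\overline{\sigma_{k_n}(\xi_n)}-q)\,p-p\,\overline{(\sigma_{k_n}(\xi_n)-\overline{q})}=0,
\]
not $\overline{\Delta_{k_n}(\xi_n)}$. The value $\overline{\Delta_{k_n}(\xi_n)}$ is $\widehat{\overline{Pu}}(\overline{\xi_n})_{k_nk_n}$, which is what the second line of the system computes; the proof of Proposition \ref{Zinfinite} uses the same labeling. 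Either way the coefficient is bounded by $|\Delta_{k_n}(\xi_n)|<\jp{\xi_n}^{-n}$, so $Pu\in C^\infty(G)$ as you claim.
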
 

\begin{proof} Suppose that condition (DC) does not hold. Then, for each $n \in \mathbb{N}$ there exists $[\xi_n] \in \widehat{G}$ and $1 \leq k_n \leq d_{\xi_n}$ such that $\jp{\xi_n}>n$ and $|\Delta_{k_n}(\xi_n)|<\jp{\xi_n}^{-n}$ for all $n \in \mathbb{N}$. By Proposition \ref{Zinfinite} we may also assume that $|\Delta_{k_n}(\xi_n)|>0$ for all $n \in \mathbb{N}$. Now consider the sequence of matrices $(x(\xi))_{[\xi] \in \widehat{G}}$ given by
	$$x(\xi)_{k\ell}:= \left\{ \begin{array}{ccc} \Delta(\xi)_{k} & \textrm{if} & [\xi]=[\xi_n] \textrm{ or } [\xi]=[\overline{\xi}_n] \textrm{ and } k=\ell=k_n\\ 0 & & \textrm{otherwise}   \end{array}  \right.$$
By hypothesis it is clear that $(x(\xi))_{[\xi]}$ is the sequence of Fourier-coefficients of a smooth function $f \in C^\infty(G)$. Now, for each $c \in \mathbb{C}\setminus \{0\}$ consider the smooth function $f_c:=c \cdot f$ and the sequence of matrices $(y(\xi))_{[\xi]}$ given by
$$y(\xi)_{k\ell}:= \left\{ \begin{array}{ccc} \sigma_{k}(\xi)\cdot c-\overline{q}\cdot c+p\cdot \overline{c} & \textrm{if} & [\xi]=[\xi_n] \textrm{ or } [\xi]=[\overline{\xi}_n] \textrm{ and } k=\ell=k_n\\ 0 & & \textrm{otherwise}   \end{array}  \right.$$
Notice that for $[\xi]=[\xi_n]$ or $[\xi]=[\overline{\xi_n}]$ and $k=\ell=k_n$ we have
$$|y(\xi)_{k \ell}| \leq |c| \cdot(|\sigma_k(\xi)|+|q|+|p|) \leq |c| \cdot (C_\sigma +|p|+|q|)\cdot \jp{\xi}^m,$$
which implies that $(y(\xi))_{[\xi]}$ is the sequence of Fourier-coefficients of a distribution $u_c \in \mathcal{D}'(G)$. We claim that $Pu_c=f_c$. To see this, it is sufficient to verify that $\widehat{Pu_c}(\xi_n)_{k_n k_n}=\widehat{f_c}(\xi_n)_{k_n k_n}$ and $\widehat{Pu_c}(\overline{\xi_n})_{k_n k_n}=\widehat{f_c}(\overline{\xi_n})_{k_n k_n}$  for all $n$, $k,\ell$. But, again by equations (\ref{subsystem}) we have
\begin{eqnarray*}
	\widehat{Pu_c}(\xi_n)_{k_n k_n} & = & (\sigma_{k_n}(\xi_n)-q) \cdot \widehat{u_c}(\xi_n)_{k_n k_n}-p\cdot \widehat{\overline{u_c}}(\xi_n)_{k_n k_n} \\
	& = & (\sigma_{k_n}(\xi_n)-q) \cdot (\sigma_{k_n}(\xi_n)\cdot c-\overline{q}\cdot c +p\cdot \overline{c})- p \cdot \overline{(\sigma_{k_n}(\overline{\xi_n})\cdot c-\overline{q}\cdot c+p\cdot \overline{c})} \\
	& = & (\sigma_{k_n}(\xi_n)-q) \cdot (\sigma_{k_n}(\xi_n)\cdot c-\overline{q}\cdot c +p\cdot \overline{c})-p \cdot (\sigma_{k_n}(\xi_n)\cdot \overline{c}-q \cdot \overline{c}+\overline{p}\cdot c) \\
	& = & c\cdot (\sigma_{k_n}(\xi_n)-q)\cdot (\sigma_{k_n}(\xi_n)-\overline{q})+p \cdot \overline{c}\cdot (\sigma_{k_n}(\xi_n)-q)- p \cdot \overline{c} \cdot(\sigma_{k_n}(\xi_n)-q) \\ 
	&   & -c \cdot |p|^2 \\
	& = & c \cdot \Delta_{k_n}(\xi_n) \\
	& = & \widehat{f_c}(\xi_n)_{k_n k_n}
\end{eqnarray*} 
for all $n \in \mathbb{N}$. Similarly, it can be verified that $\widehat{Pu_c}(\overline{\xi_n})_{k_n k_n}=\widehat{f_c}(\overline{\xi_n})_{k_n k_n}$, concluding that $Pu_c=f_c$. 
Notice that if $u_c \in C^\infty(G)$, in particular we must have $\lim_n |\widehat{u_c}(\xi_n)_{k_n}| = \lim_n |\widehat{u_c}(\overline{\xi_n})_{k_n}|=0$. But since
$$|\widehat{u}(\xi_n)_{k_n}| = |c| \cdot \left|\sigma_{k_n}(\xi_n)-\left(\overline{q}-\frac{p \cdot \overline{c}}{c}\right) \right| $$
and
$$|\widehat{u}(\overline{\xi_n})_{k_n}| = |c| \cdot \left|\overline{\sigma_{k_n}(\xi_n)}-\left(\overline{q}-\frac{p \cdot \overline{c}}{c}\right) \right| $$
we must have that $\overline{q}-p \cdot \overline{c}/c \in \mathbb{R}$. Using this observation, we are going to split the proof in some cases.

\begin{itemize} 
	
\item[1)] $\Im(\overline{q}-p) \neq 0$:

In this case we have that $u_1 \in \mathcal{D}'(G) \setminus C^\infty(G)$. In fact, if we suppose that $u_1 \in C^\infty(G)$, then by the observation we made above we must have $\overline{q}-p\cdot 1/1=\overline{q}-p \in \mathbb{R}$, which is a contradiction with our hypothesis. So $u_1 \in \mathcal{D}'(G) \setminus C^\infty(G)$, which implies that $P$ is not (GH).

\item[2)] $\Im(\overline{q}-p) = 0$:

This hypothesis means that $\Im(p)+\Im(q)=0$. This case will be subdivided into other two:

\item[2.1)] $\Im(p) \neq 0$;

For this case we will prove that $u_i \in \mathcal{D}'(G)\setminus C^\infty(G)$. In fact, if $u_i \in C^\infty(G)$, then by the observation above we must have $\overline{q}-p \cdot \overline{i}/i=\overline{q}+p \in \mathbb{R}$, which means that $\Im(q)=\Im(p)$. But since we are assuming that $\Im(p)+\Im(q)=0$, this implies that $\Im(p) =0$, which is a contradiction with our hypothesis. So $u_i \in \mathcal{D}'(G)\setminus C^\infty(G)$ and again $P$ is not (GH).

\item[2.2)] $\Im(p) =0$.

Finally, for this last case observe that since we are assuming $\Im(p)+\Im(q)=0$, the equation $\Im(p)=0$ implies that both $p,q$ are real numbers. We will see that $u_{1+ip} \in \mathcal{D}'(G)\setminus C^\infty(G)$. In fact, if this is not the case, then $\overline{q}-p \cdot (1-ip)/(1+ip)$ should be a real number. But

$$\overline{q}-p \cdot (1-ip)/(1+ip) = q-p \cdot \frac{1-2 p \cdot i -p^2}{1+p^2} =q-\frac{p(1-p^2)}{1+p^2}+\frac{2p^2}{1+p^2} \cdot i,$$
and since $p,q \in \mathbb{R}$, for this number to be real we must have $p=0$, which is a contradiction with our initial hypothesis about $p$. So $u_{1+ip} \in \mathcal{D}'(G)\setminus C^\infty(G)$ and again $P$ is not (GH). This concludes that if the condition (DC) does not hold, then $P$ is not (GH).
\end{itemize}
\end{proof}

\begin{corollary} Suppose that $[\xi] \neq [\overline{\xi}]$ for all $\xi \in \textrm{Rep}(G)$. The Vekua-type operator $P$ is (GH) if, and only if, the condition (\ref{DC}) holds.
\end{corollary}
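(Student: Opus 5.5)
The corollary combines the two directions already established, so the plan is to assemble them and check that their hypotheses match.

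For the sufficiency direction, assume the condition (DC) holds. Proposition~\ref{firstprop} then gives directly that $P$ is (GH). That result needs no assumption on self-duality: it uses only the Cramer identity (\ref{cramer}), the polynomial bound $|\sigma_k(\xi)|\le C_\sigma\jp{\xi}^m$, and the lower bound on $|\Delta_k(\xi)|$ for $\jp{\xi}\ge M$. The finitely many classes with $\jp{\xi}\le M$ are absorbed into the constant $C_1(N)$. Nothing further is required here.

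For the necessity direction, assume $[\xi]\neq[\overline{\xi}]$ for every $\xi\in\textrm{Rep}(G)$ and that $P$ is (GH). The preceding proposition states exactly that (DC) must then hold, so I would cite it. The argument runs by contraposition. If (DC) fails, we pick $[\xi_n]$ and $k_n$ with $0<|\Delta_{k_n}(\xi_n)|<\jp{\xi_n}^{-n}$; the strict positivity is available because Proposition~\ref{Zinfinite} already handles the case where $\mathcal{Z}$ is infinite. From these we build a smooth $f$ whose Fourier coefficients are supported on the classes $[\xi_n]$ and $[\overline{\xi_n}]$. We then exhibit, for suitable $c\in\{1,\,i,\,1+ip\}$, a distribution $u_c$ with $Pu_c=c f$ whose coefficients $|c|\,|\sigma_{k_n}(\xi_n)-(\overline{q}-p\overline{c}/c)|$ cannot tend to zero.

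Combining the two implications yields the equivalence. The only point I would double-check is the role of the non-self-duality hypothesis, since it is what allows the coefficients on $[\xi_n]$ and $[\overline{\xi_n}]$ to be assigned independently in the constructions. This is also why the hypothesis appears only in the necessity half. There is no genuine obstacle beyond making sure both earlier propositions are invoked under matching hypotheses.
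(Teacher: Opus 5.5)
Your proposal is correct and matches how the paper obtains the corollary, which it states without separate proof: sufficiency is Proposition~\ref{firstprop}, which indeed uses no self-duality assumption, and necessity is the preceding proposition, where Proposition~\ref{Zinfinite} disposes of the case $\Delta_{k_n}(\xi_n)=0$ and non-self-duality lets the coefficients at $[\xi_n]$ and $[\overline{\xi_n}]$ be prescribed independently. On the point you wanted to double-check, note that the trivial representation is always self-dual, so the hypothesis as literally stated never holds; it should be read as holding for all but finitely many classes, which is all these constructions use, since the sequences $[\xi_n]$ run through infinitely many distinct classes.
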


Now we are going to study a notion of global solvability for the Vekua-type operator $P$. First, let $u,f \in \mathcal{D}'(G)$ such that $Pu=f$. By (\ref{cramer}) we have the following:
\begin{equation}\label{comp}
	\Delta_k(\xi)=0 \Rightarrow (\sigma_k(\xi)-\overline{q})\cdot \widehat{f}(\xi)_{k\ell}+p \cdot \overline{ \widehat{f}(\overline{\xi})_{k\ell}} =0.
\end{equation} 
So for a distribution $f$ to be in the image of the operator $P$, the condition above must hold. Because of that, we will consider the space
\begin{equation}\label{setA} \mathcal{A}:=\{ f \in \mathcal{D}'(G);(\ref{comp}) \textrm{ holds} \}
\end{equation} 
and call $\mathcal{A}$ the space of admissible distributions. 

\begin{definition} We say that the Vekua-type operator $P$ is globally solvable (or just (GS)) if $P(\mathcal{D}'(G)) = \mathcal{A}$.
\end{definition}

We will see the characterization for when $P$ is (GS) in terms of a similar condition as the condition (DC) in the case of non-self dual representations. So let us assume that $[\xi] \neq [\overline{\xi}]$ for all $\xi \in \textrm{Rep}(G)$.

Let $f \in \mathcal{A}$, $[\xi] \in \widehat{G}$ and $1 \leq k \leq d_\xi$. If $\Delta_k(\xi)\neq 0$, then, by (\ref{cramer}), any solution $u \in \mathcal{D}'(G)$ for $Pu=f$ must satisfy
$$\widehat{u}(\xi)_{k\ell} = \frac{1}{\Delta_k(\xi)}\cdot \left((\sigma_k(\xi)-\overline{q})\cdot \widehat{f}(\xi)_{k\ell}+p \cdot \overline{ \widehat{f}(\overline{\xi})_{k\ell}} \right).$$
Define a sequence of matrices $(x(\xi))_{[\xi] \in \widehat{G}}$ by
$$x(\xi)_{k\ell} :=\frac{1}{\Delta_k(\xi)}\cdot \left((\sigma_k(\xi)-\overline{q})\cdot \widehat{f}(\xi)_{k\ell}+p \cdot \overline{ \widehat{f}(\overline{\xi})_{k\ell}} \right)$$
if $\Delta_k(\xi) \neq 0$, and
$$\left \{ \begin{array}{ccc} x(\xi)_{k\ell} & = & 0 \\ x(\overline{\xi})_{k\ell} & = & -\frac{1}{\overline{p}} \cdot \overline{\widehat{f}(\xi)_{k\ell}} \end{array}\right.$$ 
if $\Delta_k(\xi)=0$. 

We will define a sufficient condition for this sequence of matrices to define a distribution on $G$.

\begin{definition} We say that the operator $P$ satisfy the condition (DC') if there exists $C>0$ and $M>0$ such that
	\begin{equation}\label{DC'} |\Delta_k(\xi)| \geq C\cdot \jp{\xi}^{-M}		
	\end{equation}
for all $[\xi] \in \widehat{G}$ and $1 \leq k \leq d_\xi$ such that $\Delta_k(\xi) \neq 0$. 	
\end{definition} 

Assume that (\ref{DC'}) holds and let $C$ and $M$ be the positive constants of the definition, we will see that the sequence $(x(\xi)_{[\xi]})$ defined before is the sequence of Fourier-coefficients of a distribution $u \in \mathcal{D}'(G)$. Since $f \in \mathcal{D}'(G)$, there exists $n$ and $C_f$ such that
$$|\widehat{f}(\xi)_{k\ell}| \leq C_f \cdot \jp{\xi}^n$$
for all $[\xi]\in \widehat{G}$ and $1 \leq k,\ell \leq d_\xi$. Now fix $[\xi] \in \widehat{G}$ and $1 \leq k,\ell \leq d_\xi$. If $\Delta_k(\xi)=0$ we have $x(\xi)_{k\ell}=0$ and no estimate is necessary in this case. On the other hand,
$$|x(\overline{\xi})_{k\ell}| = \frac{1}{|p|} |\widehat{f}(\xi)_{k\ell}| \leq \frac{C_f}{|p|}\cdot \jp{\overline{\xi}}^n<\frac{C_f}{|p|}\cdot \jp{\overline{\xi}}^{M+m+n}.$$
Now, if $\Delta_k(\xi) \neq 0$, then
\begin{eqnarray*} |x(\xi)_{k\ell}|& \leq & \frac{1}{|\Delta_k(\xi)|}\cdot \left[(|\sigma_k(\xi)|+|q|)|\widehat{f}(\xi)_{k\ell}|+|p|\cdot |\widehat{f}(\overline{\xi})_{k\ell}| \right] \\
	& \leq & \frac{1}{C} \jp{\xi}^M \cdot \left[ (C_\sigma+|q|)\jp{\xi}^m C_f \jp{\xi}^n+|p| C_f \jp{\xi}^n \right] \\
	& < & \frac{1}{C} \left( C_f\cdot (C_\sigma+|q|)+|p|\right) \cdot \jp{\xi}^{M+m+n}.
\end{eqnarray*} 
In this way, if $\widetilde{C}:= \max \{C_f/|p|, \left( C_f\cdot (C_\sigma+|q|)+|p|\right)/C\}>0$, then
$$|x(\xi)_{k\ell}| \leq \widetilde{C} \cdot \jp{\xi}^{M+m+n}$$
for all $[\xi]\in \widehat{G}$ and $1 \leq k,\ell \leq d_\xi$, so $(x(\xi))_{[\xi]}$ defines a distribuition $u \in \mathcal{D}'(G)$. 

By definition of $u$, it is clear that $\widehat{Pu}(\xi)_{k\ell}=\widehat{f}(\xi)_{k\ell}$ if $\Delta_k(\xi)\neq 0$. If $\Delta_k(\xi)=0$, then
\begin{eqnarray*} \widehat{Pu}(\xi)_{k\ell} & = & (\sigma_k(\xi)-q) \cdot \widehat{u}(\xi)_{k\ell}- p \cdot \overline{\widehat{u}(\overline{\xi})_{k\ell}} \\
	& = & (\sigma_k(\xi)-q) \cdot 0 -p \cdot \overline{ -\frac{1}{\overline{p}}\cdot \overline{f}(\xi)_{k\ell} } \\
	& = & \widehat{f}(\xi)_{k\ell}
\end{eqnarray*} 	
and
\begin{eqnarray*} \widehat{Pu}(\overline{\xi})_{k\ell} & = & (\overline{\sigma_k(\xi)}-q)\cdot \widehat{u}(\overline{\xi})_{k\ell} - p \cdot \overline{ \widehat{u}(\xi)_{k\ell}} \\
	& = & (\overline{\sigma_k(\xi)}-q)\cdot -\frac{1}{\overline{p}} \overline{ \widehat{f}(\xi)_{k\ell}} \\
	& = & -\frac{1}{\overline{p}} \overline{ (\sigma_k(\xi)-\overline{q})\cdot \widehat{f}(\xi)_{k\ell} } \\
	& = &  \frac{1}{\overline{p}}\cdot \overline{ p \cdot \overline{\widehat{f}(\overline{\xi})_{k\ell}}} \\
	& = & \widehat{f}(\overline{\xi})_{k\ell}, 
\end{eqnarray*} 
which guarantees that $Pu=f$ and $P$ is (GS). In the next proposition we are going to prove the converse.

\begin{proposition}\label{GSconstant} Assume that $[\xi] \neq [\overline{\xi}]$ for all $\xi \in \textrm{Rep}(G)$. The Vekua-type operator $P$ is (GS) if, and only if, the condition (\ref{DC'}) holds.
\end{proposition}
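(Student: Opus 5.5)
The sufficiency direction, (DC') $\Rightarrow$ (GS), is exactly the construction carried out just before the statement. So it remains to prove the converse: if (DC') fails, then $P$ is not (GS). I will argue by contraposition. I will build an admissible $f$, indeed a smooth one, and show that no distribution $u$ solves $Pu=f$.

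Assume (DC') fails. Then for each $n$ there are $[\xi_n]\in\widehat{G}$ and $1\le k_n\le d_{\xi_n}$ with
$$0<|\Delta_{k_n}(\xi_n)|<\jp{\xi_n}^{-n}.$$
Since $\Delta_{k_n}(\xi_n)\neq 0$, the bound forces $\jp{\xi_n}\to\infty$: otherwise $\xi_n$ would range over finitely many classes, where the nonzero discriminants are bounded below. Passing to a subsequence, I will assume the classes $[\xi_n]$ are pairwise distinct, that $[\xi_n]\neq[\overline{\xi_{\tilde n}}]$ for $n\neq\tilde n$, and that $\jp{\xi_n}$ is strictly increasing. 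Note also that $\sigma_L(\overline{\xi})=\overline{\sigma_L(\xi)}$ gives $\Delta_{k}(\overline{\xi})=\overline{\Delta_k(\xi)}$, so the small discriminants come in conjugate pairs.

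Rather than reusing the complicated $u_c$ trick from the (GH) proof, I will choose $f$ so that the Cramer formula (\ref{cramer}) itself forces non-temperate coefficients. Let $f$ have nonzero coefficients only at $\widehat{f}(\xi_n)_{k_nk_n}=a_n$ and $\widehat{f}(\overline{\xi_n})_{k_nk_n}=b_n$. The goal is to make
$$(\sigma_{k_n}(\xi_n)-\overline{q})\,a_n+p\,\overline{b_n}$$
of size $\jp{\xi_n}^{-n/2}$ while $a_n,b_n$ decay rapidly. The simplest choice is $a_n=\jp{\xi_n}^{-n/2}/|\sigma_{k_n}(\xi_n)-\overline{q}|$ when that denominator is bounded below, with $b_n=0$. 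Otherwise take $a_n=0$ and $b_n=\jp{\xi_n}^{-n/2}/\overline{p}$. In either case $|a_n|,|b_n|\lesssim\jp{\xi_n}^{-n/2}$, so $f\in C^\infty(G)$.

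Next I check that $f\in\mathcal{A}$. Condition (\ref{comp}) only involves indices where $\Delta_k(\xi)=0$. At all such indices $f$ has zero coefficients, since the $\Delta_{k_n}(\xi_n)$ and their conjugates are nonzero. So the condition holds trivially.

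If some $u\in\mathcal{D}'(G)$ solved $Pu=f$, then (\ref{cramer}) would give
$$|\widehat{u}(\xi_n)_{k_nk_n}|=\frac{\jp{\xi_n}^{-n/2}}{|\Delta_{k_n}(\xi_n)|}>\jp{\xi_n}^{n/2}.$$
This violates the polynomial bound required of distribution coefficients, since $\jp{\xi_n}\to\infty$ and $n$ is unbounded. Hence $P(\mathcal{D}'(G))\neq\mathcal{A}$, so $P$ is not (GS).

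The main obstacle is the bookkeeping at the conjugate index $\overline{\xi_n}$. The coefficient $b_n$ also enters the Cramer formula there, and I must make sure my choice does not accidentally violate admissibility or create inconsistency. With the pairing handled as above it does not: only one of $a_n,b_n$ is nonzero, the Cramer numerator at $\xi_n$ has exactly the intended size, and I need only one index per $n$ to blow up. Non-self-duality is used precisely to guarantee that $\xi_n$ and $\overline{\xi_n}$ are distinct indices, so that $a_n$ and $b_n$ can be prescribed independently.
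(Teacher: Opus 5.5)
Your proof is correct and is essentially the paper's argument. The paper takes $\widehat{f}(\overline{\xi_n})_{k_nk_n}=\overline{p}^{-1}$, which is your second option without the damping factor, and gets the same blow-up $\Delta_{k_n}(\xi_n)\,\widehat{u}(\xi_n)_{k_nk_n}=1$ from (\ref{cramer}). That second option works for every $n$, so your case split on $|\sigma_{k_n}(\xi_n)-\overline{q}|$ is unnecessary.

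Your factor $\jp{\xi_n}^{-n/2}$ gives a slightly stronger conclusion: even smooth admissible data can fail to lie in $P(\mathcal{D}'(G))$. To get $\jp{\xi_n}\to\infty$, use the negation of (DC') with $C=1/n$ and $M=n$. At the trivial representation $\jp{\xi}=1$, so the bound $\jp{\xi}^{-n}$ alone does not shrink.
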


\begin{proof} We already proved that if condition (DC') holds, then $P$ is (GS). Assume now that (DC') does not hold. In this way, for each $n \in \mathbb{N}$ there exists $[\xi_n] \in \widehat{G}$ and $1 \leq k_n \leq d_{\xi_n}$ such that 
	$$0<|\Delta_{k_n}(\xi_n)|<\jp{\xi_n}^{-n}.$$	
Again we may assume that $[\xi_n] \neq [\xi_{\tilde{n}}]$ and $[\xi_n] \neq [\overline{\xi_{\tilde{n}}}]$ for all ${\tilde{n}} \neq n$. Consider the sequence of matrices $(x(\xi)_{[\xi]})$ given by
$$x(\xi)_{k\ell}:= \left \{ \begin{array}{ccc} \overline{p}^{-1} & \textrm{if} & \xi=\overline{\xi_n} \textrm{ and } k=\ell=k_n \\ 0 &  & \textrm{otherwise} \end{array}  \right.  $$ 
It is clear from the definition that $(x(\xi))_{[\xi]}$ defines a distribution $f$ and that $f \in \mathcal{A}$. We claim that there is no $u \in \mathcal{D}'(G)$ such that $Pu=f$. In fact, suppose by contradiction that such $u$ exists. Then, for all $n\in \mathbb{N}$ we must have
$$\Delta_{k_n}(\xi_n) \cdot \widehat{u}(\xi_n)_{k_n k_n} =(\sigma_{k_n}(\xi_n)-\overline{q})\cdot \widehat{f}(\xi_n)_{k_n k_n}+p \cdot \overline{ \widehat{f}(\overline{\xi_n})_{k_n k_n}}=0+p \cdot \frac{1}{p}=1,$$
which implies that
$$|\widehat{u}(\xi_n)_{k_n k_n}| = \frac{1}{|\Delta_{k_n}(\xi_n)|} > \jp{\xi_n}^n$$
for all $n \in \mathbb{N}$. So, the sequence $(\widehat{u}(\xi_n))_n$ does not have a moderate growth and cannot define a smooth distribution. In this way we conclude that $P$ is not (GS) and finish the proof. 
\end{proof}

A very interesting non-commutative Lie group, whose unitary dual can be explicitly described, is the $3$-sphere $\mathbb{S}^3 \doteq \{x \in \mathbb{R}^4; \|x\|_2 = 1\} \equiv SU(2)$. A classic result shows that the unitary dual $\widehat{\mathbb{S}^3}$ is in bijection with the set $\frac{1}{2}\mathbb{N}_0$, where for each $\ell \in \frac{1}{2}\mathbb{N}_0$, there is a unique, up to isomorphism, continuous irreducible unitary representation $t^\ell : \mathbb{S}^3 \to U(2\ell + 1)$. It is common to represent the entries of the matrix-valued function $t^\ell$ by $t^\ell_{mn}$, where $m, n \in J_\ell \doteq \{-\ell, -\ell + 1, -\ell + 2, \dots, \ell - 1, \ell\}$.

The Lie algebra $\mathfrak{su}(2)$ has a standard basis $\{Y_1, Y_2, Y_3\}$, which satisfies the commutation relations $[Y_1, Y_2] = Y_3$, $[Y_2, Y_3] = Y_1$, and $[Y_3, Y_1] = Y_2$. The left-invariant operators associated with these Lie algebra elements will be denoted by $D_1$, $D_2$, and $D_3$, respectively. It can be shown that $D_3$ satisfies
$$
D_3(t^\ell_{mn}) = -in \cdot t^\ell_{mn}
$$
for all $m, n \in J_\ell$ and $\ell \in \frac{1}{2}\mathbb{N}_0$. 

The action of the operators $D_1$ and $D_2$ on the functions $t^\ell_{mn}$ can also be expressed, but the formulas are more complicated. However, there is an alternative basis for $\mathfrak{su}(2)$ (over $\mathbb{C}$), denoted by $\{\partial_0, \partial_+, \partial_-\}$, where their action on the functions $t^\ell_{mn}$ becomes much simpler. Defining
$$
\partial_+ \doteq iD_1 - D_2, \quad \partial_- \doteq iD_1 + D_2, \quad \partial_0 \doteq iD_3,
$$
it is well known that:
\begin{align}\label{tabela}
	\partial_+(t^\ell_{mn})  = & -\sqrt{(\ell - n)(\ell + n + 1)} \cdot t^{\ell}_{m, n+1}, \nonumber \\
	\partial_-(t^\ell_{mn})  = & -\sqrt{(\ell + n)(\ell - n + 1)} \cdot t^{\ell}_{m, n-1}, \nonumber \\
	\partial_0(t^\ell_{mn})  = & n \cdot t^\ell_{mn},
\end{align}
for all $\ell \in \frac{1}{2}\mathbb{N}_0$ and $m, n \in J_\ell$.

One special property about $\mathbb{S}^3$ is that since there is exactly one representation of each dimension on $\widehat{\mathbb{S}^3}$, it follows that for all $\ell \in \frac{1}{2}\mathbb{N}_0$, the conjugated representation $\overline{\ell}$ is isomorphic to $\ell$. So we cannot apply all the previous results for Vekua-type operators in this case since we were assuming most of the time that $[\xi] \neq [\overline{\xi}]$ for all $[\xi]\in \widehat{G}$. However, in the case of the group $\mathbb{S}^3$ it can be proved that
$$\overline{t^\ell(x)_{mn}}=(-1)^{m-n} t^\ell(x)_{-m -n},$$ 
which implies that
\begin{equation}\label{conj}
	\widehat{u}(\overline{\ell})_{mn} = (-1)^{m-n} \widehat{u}(\ell)_{-m-n}
\end{equation}
for all $u \in \mathcal{D}'(\mathbb{S}^3)$, $\ell \in \frac{1}{2}\mathbb{N}_0$ and $m,n \in J_\ell$. 
This property will allow us to adapt previous proofs for Vekua-type operators on $\mathbb{S}^3$. Essentially, instead of defining singular solutions on representations $[\xi]$ and $[\overline{\xi}]$ separately, we will define them in a similar way on the entries $mn$ and $-m-n$.

Now let $L: \mathcal{D}'(\mathbb{S}^3) \rightarrow \mathcal{D}'(\mathbb{S}^3)$ be a continuous left-invariant operator that preserves smooth functions such that $\sigma_L(\ell)$ is a diagonal matrix for all $\ell \in \frac{1}{2}\mathbb{N}_0$. Let $p,q \in \mathbb{C}$ with $p \neq 0$ and $P: \mathcal{D}'(\mathbb{S}^3) \rightarrow \mathcal{D}'(\mathbb{S}^3)$ be the corresponding Vekua-type operator.

If $u \in \mathcal{D}'(\mathbb{S}^3)$, $\ell \in \frac{1}{2}\mathbb{N}_0$ and $m,n \in J_\ell$, then by property (\ref{conj}) we have
\begin{equation}\label{eqn1} (\sigma_m(\ell)-q) \cdot \widehat{u}(\ell)_{mn}-p \cdot (-1)^{m-n} \overline{\widehat{u}(\ell)_{-m-n}}=\widehat{Pu}(\ell)_{mn}.
\end{equation} 
By applying the equation above on the representation $\overline{\ell}$ and taking the complex-conjugate on it, again using property (\ref{conj}) we will get
\begin{equation}\label{eqn2} -\overline{p} \cdot \widehat{u}(\ell)_{mn}+(-1)^{m-n}(\overline{\sigma_{-m}(\ell)}-\overline{q}) \overline{\widehat{u}(\ell)_{-m -n}} = (-1)^{m-n} \overline{ \widehat{Pu}(\ell)_{-m -n}}.
\end{equation} 

The module of the discriminant of the system (\ref{eqn1}) and (\ref{eqn2}) is given by
$$|\Delta_m(\ell)| = | (\sigma_m(\ell)-q) \cdot (\overline{\sigma_{-m}(\ell)}-\overline{q})-|p|^2|.$$

The set $\mathcal{Z}$ defined in (\ref{setZ}) takes the form
$$\mathcal{Z} = \left\{\ell \in \frac{1}{2}\mathbb{N}_0; \exists m \in J_\ell \textrm{ such that } |\Delta_m(\ell)|=0\right\}.$$

We are going to start by proving an analogue of the Proposition \ref{Zinfinite} for this case.

\begin{proposition}\label{ZinfiniteSU2} If the set $\mathcal{Z}$ is infinite, then $P$ is not (GH).
\end{proposition}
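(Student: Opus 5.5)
Following the paper's hint, I would adapt the proof of Proposition \ref{Zinfinite}, replacing the pair of representations $[\xi_n],[\overline{\xi_n}]$ by the pair of entries $(m,n)$ and $(-m,-n)$ inside the same representation $\ell$. Since $\mathcal{Z}$ is infinite, I choose a sequence of distinct $\ell_j \in \mathcal{Z}$ and, for each $j$, some $m_j\in J_{\ell_j}$ with $\Delta_{m_j}(\ell_j)=0$, where
$$\Delta_{m}(\ell)=(\sigma_m(\ell)-q)(\overline{\sigma_{-m}(\ell)}-\overline{q})-|p|^2.$$
I also fix a column index $n_j\in J_{\ell_j}$, say $n_j=m_j$ for simplicity.

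The singular solution $u$ is defined by prescribing its Fourier coefficients. If $(m_j,n_j)\neq(-m_j,-n_j)$, I set
$$\widehat{u}(\ell_j)_{m_jn_j}=\overline{\sigma_{-m_j}(\ell_j)}-\overline{q}, \qquad \widehat{u}(\ell_j)_{-m_j,-n_j}=(-1)^{m_j-n_j}\,\overline{p},$$
with the sign chosen so that $-p(-1)^{m_j-n_j}\overline{\widehat{u}(\ell_j)_{-m_j-n_j}}=-|p|^2$. All other coefficients are zero. Equation (\ref{eqn1}) at the entry $(m_j,n_j)$ then reduces to $\Delta_{m_j}(\ell_j)=0$. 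At the entry $(-m_j,-n_j)$ it gives
$$(\sigma_{-m_j}(\ell_j)-q)(-1)^{m_j-n_j}\overline{p}-p(-1)^{m_j-n_j}\,\overline{\overline{\sigma_{-m_j}(\ell_j)}-\overline{q}},$$
which vanishes identically. One must check that $(-1)^{-m-(-n)}=(-1)^{m-n}$; this holds because $m-n\in\mathbb{Z}$. All remaining entries of $\widehat{Pu}$ vanish trivially, so $Pu=0$. The coefficients are bounded by $(C_\sigma+|q|+|p|)\langle\ell\rangle^{m}$, so $u\in\mathcal{D}'(\mathbb{S}^3)$. Since $|\widehat{u}(\ell_j)_{-m_j,-n_j}|=|p|\neq0$ does not decay, $u\notin C^\infty(\mathbb{S}^3)$, and hence $P$ is not (GH).

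The main obstacle is the degenerate case $(m_j,n_j)=(-m_j,-n_j)$, that is, $m_j=n_j=0$ with $\ell_j$ an integer; then the two entries coincide. I would avoid it by choosing $n_j\neq0$, which is always possible when $\ell_j\geq 1/2$, since the system (\ref{eqn1})–(\ref{eqn2}) and $\Delta_m(\ell)$ do not depend on $n$. If some $m_j=0$, I take $n_j=1$ (or $n_j=1/2$ for half-integer $\ell_j$), so that $(0,n_j)\neq(0,-n_j)$ and the construction above applies unchanged.

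Finally, I would double-check consistency with the row dependence of the symbol: the diagonal symbol acts on row indices, and $\sigma_{-m}$ appears precisely at the entry $(-m,-n)$, matching the formula for $\Delta_m(\ell)$.
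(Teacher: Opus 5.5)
Your strategy is the paper's: concentrate $u$ on the entries $(m_j,n_j)$ and $(-m_j,-n_j)$ of $\ell_j$, fill them with a kernel vector of the singular system (\ref{eqn1})--(\ref{eqn2}), and use the non-decaying $p$-entry to get $u\notin C^\infty(\mathbb{S}^3)$. However, the value you prescribe is off by a conjugation, so the verification fails as written. Your own requirement $-p(-1)^{m_j-n_j}\overline{\widehat{u}(\ell_j)_{-m_j,-n_j}}=-|p|^2$ forces $\widehat{u}(\ell_j)_{-m_j,-n_j}=(-1)^{m_j-n_j}p$, not $(-1)^{m_j-n_j}\overline{p}$. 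With the value $(-1)^{m_j-n_j}\overline{p}$ you actually wrote, one gets
\begin{gather*}
\widehat{Pu}(\ell_j)_{m_jn_j}=\Delta_{m_j}(\ell_j)+|p|^2-p^2=|p|^2-p^2,\\
\widehat{Pu}(\ell_j)_{-m_j,-n_j}=(-1)^{m_j-n_j}\bigl(\sigma_{-m_j}(\ell_j)-q\bigr)(\overline{p}-p).
\end{gather*}
So the expression you claim ``vanishes identically'' does not vanish. For $p\notin\mathbb{R}$ the nonzero entry $|p|^2-p^2$ recurs for every $j$, so $Pu$ is not even smooth, and this $u$ does not show that $P$ fails to be (GH).

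Replacing $\overline{p}$ by $p$ fixes both checks. The second becomes $(-1)^{m_j-n_j}\bigl[(\sigma_{-m_j}(\ell_j)-q)p-p(\sigma_{-m_j}(\ell_j)-q)\bigr]=0$. For $n_j=m_j$ this is exactly the paper's choice $\widehat{u}(\ell_j)_{-m_j,-m_j}=p$. The remaining parts of your argument (vanishing of all other entries, the polynomial bound, and non-decay of the $p$-entry) are fine.

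With that correction your argument coincides with the paper's, and your handling of the degenerate case is more careful than the paper's proof. When $m_j=0$, the paper's definition assigns both $\overline{\sigma_0(\ell_j)}-\overline{q}$ and $p$ to the same entry $(0,0)$. This case really occurs. For $L=a\partial_0$ with $a\in\mathbb{R}\setminus\{0\}$ and $|q|=|p|$, one has $\Delta_m(\ell)=-a^2m^2+2iam\,\Im(q)$, which vanishes only at $m=0$, for every integer $\ell$.

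Passing to the pair $(0,n_j)$, $(0,-n_j)$ with $n_j\neq 0$ is legitimate, because (\ref{eqn1}) and $\Delta_m(\ell)$ do not depend on the column index. With the corrected value $\widehat{u}(\ell_j)_{0,-n_j}=(-1)^{n_j}p$, both checks go through. Your half-integer alternative is vacuous, since $0\notin J_\ell$ when $\ell\notin\mathbb{N}_0$.
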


\begin{proof} If $\mathcal{Z}$ is infinite, then there exists an increasing sequence $(\ell_j)_{j \in \mathbb{N}}$ in $\frac{1}{2}\mathbb{N}_0$ and $m_j \in J_{\ell_j}$ such that $\Delta_{m_j}(\ell_j)=0$ for all $j \in \mathbb{N}$. Consider the distribution $u \in \mathcal{D}'(\mathbb{S}^3)$ with Fourier-coefficients given by
$$ \widehat{u}(\ell)_{mn}:= \left\{ \begin{array}{ccc}\overline{\sigma_{-m}(\ell)}-\overline{q} & \textrm{if} & \ell=\ell_j, m=n=m_j \\ p & \textrm{if} & \ell=\ell_j, m=n=-m_j \\ 0 & & \textrm{otherwise}  \end{array}   \right. .$$
Like in the proof of Proposition \ref{Zinfinite}, it is easy to see that $Pu=0$. On the other hand, $u \notin C^\infty(\mathbb{S}^3)$ since $\widehat{u}(\ell_j)_{-m_j -m_j}=p$ for infinite indexes $j$, which guarantees that the sequence $(\|\widehat{u}(\ell_j)\|)_j$ does not decay. This concludes that $P$ is not (GH). 
\end{proof}

\begin{example} Consider numbers $r \in 2\cdot \mathbb{N}$, $p,q \in \mathbb{C}$ such that $\Re(q)=1$, $|q|=|p|$ and $a \in \mathbb{R}\setminus\{0\}$ with $(2/a)^{1/r} \in \frac{1}{2}\mathbb{Z}$. Let $L$ be the operator $L=a\cdot \partial_0^r$ acting on $\mathbb{S}^3$. In this case 
	\begin{eqnarray*} 
		|\Delta(\ell)_m| & = & |(am^r-q)\cdot (a(-m)^r-\overline{q})-|p|^2) \\
		& = & |a^2m^{2r}-2am^r\Re(q)+|q|^2-|p|^2|\\
		& = & |(am^r)^2-2am^r|.
	\end{eqnarray*}  
Since $(2/a)^{1/r}=m \in \frac{1}{2}\mathbb{Z}$, we have $am^r=2$, so $\Delta(\ell)_m=0$. By Proposition \ref{ZinfiniteSU2} the Vekua-type operator
$$Pu=a\partial_0^r u +pu+q\overline{u}$$
is not (GH). Note that since for all $\ell \in \mathbb{N}_0$ we have $0 \in J_\ell$, we have $\sigma_L(\ell)_0=0$, so the operator $L$ is also not (GH) (Theorem 3.2 of \cite{da2025diagonal}). 
\end{example} 

\begin{example} Let $G=\mathbb{S}^3 \times \mathbb{T}^1$ and consider numbers $a,q \in \mathbb{R}\setminus\{0\}$, $p \in \mathbb{C}\setminus \{0\}$ and $r \in \mathbb{N}$, such that $q \notin \frac{1}{2}\mathbb{Z}$ and also $q\pm |p| \notin \frac{1}{2}\mathbb{Z}$. Now consider the operator $L=\partial_0+ia D_t^r$ and the corresponding Vekua-type operator 
	$$Pu=\partial_0 u +iaD_t^ru+qu+p\overline{u},$$
	acting on $G$. In this case we have $\widehat{G} \equiv \frac{1}{2}\mathbb{N}_0 \times \mathbb{Z}$ and for each $(\ell,k) \in \widehat{G}$ and $m,n \in J_\ell$ we can see that $\sigma_L(\ell,k)_{mn}=\delta_{mn}\cdot (m+iak^r)$. So $L$ is a diagonal operator and

\begin{eqnarray*} \Delta(\ell,k)_m & = & (m+iak^r-q)^2-|p|^2   \\
	& = & m^2+2iamk^r-a^2k^{2r}-2q(m+iak^r)+q^2-|p|^2 \\
	& = & m^2-a^2k^{2r}-2qm+q^2-|p|^2+2iak^r(m-q)
\end{eqnarray*} 
If $k\neq 0$, then 
$$|\Delta(\ell,k)_m| \geq |2iak^r(m-q)| \geq 2|a||m-q|.$$
Since $q \notin \frac{1}{2}\mathbb{Z}$ and $\{|m-q|; m \in \frac{1}{2}\mathbb{Z}\}$ is discrete, there exists $C_1>0$ such that $|\Delta(\ell,k)_m| \geq C_1$.
If $k=0$, then 
$$\Delta(\ell,0)_m = m^2-2qm+q^2-|p|^2,$$
and 
$$\Delta(\ell,0)_m = 0 \Leftrightarrow m=\frac{2q\pm \sqrt{4q^2-4(q^2-|p|^2)}}{2}=q\pm |p|.$$
Again, since $q\pm |p| \notin \frac{1}{2}\mathbb{Z}$ and $\{|m^2 -2qm+q^2-|p|^2|; m \in \frac{1}{2}\mathbb{Z}\}$ is discrete, there exists $C_2>0$ such that $|\Delta(\ell,0)_m| \geq C_2$. In this way, taking $C:=\min\{C_1,C_2\}>0$, we have $|\Delta(\ell,k)_m| \geq C  \geq C \jp{(\ell,k)}^{-1}$ for all $(\ell,k) \in \widehat{G}$ and $m \in J_\ell$. By Proposition \ref{firstprop} the Vekua-type operator $P$ is (GH). On the other hand, for all $\ell \in \mathbb{N}_0$ we have $0 \in J_\ell$, so $\sigma_L(\ell,0)_0=0$, so by Theorem (3.2) of \cite{da2025diagonal}, the operator $L$ is not (GH) (on the another hand, $L$ is (GS) by Theorem 3.3 of the same reference). 

\end{example} 

\begin{example} A variation of the last example. Again consider the group $G=\mathbb{S}^3 \times \mathbb{T}^1$ and numbers $a,q \in \mathbb{R}\setminus\{0\}$, $p \in \mathbb{C}\setminus\{0\}$, but now suppose that $\Re(q)=0$ and $|p|=|q|$. Define the operator $L=\partial_0+iaD_t$ acting on $G$. In this case we have
\begin{eqnarray*}
 \Delta(\ell,k)_m & = & (m+iak-q)(m-iak-\overline{q})-|p|^2 \\
 & = & (m^2-a^2k^2)+2iamk.
\end{eqnarray*} 
In this way, for all $\ell \in \mathbb{N}_0$ we have $\Delta(\ell,0)_0=0$. By Proposition \ref{Zinfinite} the Vekua-type operator
$$Pu=\partial_0u+iaD_tu+qu+p\overline{u}$$
is not (GH). On the other hand, if $\Delta(\ell,k)_m \neq 0$, then $m,k \neq 0$, which implies $|m|\cdot |k|\geq \frac{1}{2}$, and then
$$|\Delta(\ell,k)_m| \geq |a|>0.$$
So by Proposition \ref{GSconstant} the operator $P$ is (GS). 
	
\end{example}

\section{A non-constant coefficient case}\label{nonconstant} 

Suppose that $D:\mathcal{D}'(G) \rightarrow \mathcal{D}'(G)$ is a diagonal operator in the sense of the last section, also suppose that $s,q:\mathbb{T}^1\rightarrow \mathbb{R}$ are smooth functions such that $q\geq 0$ and $q$ is not identically zero. Let also $p_0,\delta,\lambda \in \mathbb{R}$ and $\alpha \in \mathbb{C}\setminus \{0\}$. Consider the operator
\begin{equation}\label{Lnonconstant}
	L=\partial_t -(p_0+i\lambda q(t))\cdot D
\end{equation}
on $\mathbb{T}^1 \times G$ and the Vekua-type operator
\begin{equation}\label{Pnonconstant}
	Pu=Lu-(s(t)+i\delta q(t))\cdot u-\alpha q(t) \cdot \overline{u}
\end{equation}
on $\mathbb{T}^1\times G$. The main goal here is to present some sufficient conditions that guarantee $P$ is $C^\infty$-globally solvable, in the sense that for all $f \in C^\infty(\mathbb{T}^1 \times G)$ there exists $u \in C^\infty(\mathbb{T}^1 \times G)$ such that $Pu=f$. For this, define the following quantities:

\begin{itemize}
	\item $q_0=\int_0^{2\pi} q(\tau)d\tau$;
	\item $s_0=\int_0^{2\pi} s(\tau)d\tau$;
\end{itemize}

For each $[\xi]\in \widehat{G}$ we will write $\sigma_m(\xi)=a(\xi)_m+ib(\xi)_m$. In terms of the coefficients and the symbol, we are going to assume that:
\begin{itemize}
	\item[a)] $|\delta|\neq |\alpha|$;
	\item[b)] $|\lambda \sigma_m +\delta|\neq |\alpha|$ for all $[\xi]\in \widehat{G}$ and $1\leq m \leq d_\xi$. 
	We define $\rho_m(\xi)$ as the non-zero complex number such that $\Re(\rho_m(\xi))\geq 0$ and $\rho_m(\xi)^2 =|\alpha|^2-(\lambda \sigma_m(\xi)+\delta)^2$. 
	\item[c)] There are constants $C_0>0$ and $j_0 \in \mathbb{N}$ such that
	\begin{equation}\label{DC_rho} 
	\left | \rho_m(\xi) \right| \geq C_0 \jp{\xi}^{-j_0}
	\end{equation} 
	for all $[\xi] \in \widehat{G}$ and $1 \leq m \leq d_\xi$.
	\item[d)] If $p_0\neq 0$, we ask that there is constant $C>0$ such that 
	\begin{equation}\label{log} 
	|a_m(\xi)| \leq C \log \jp{\xi} 
	\end{equation}
	for all $\xi$ and $m$.
	\item[e)] Condition (DCn): there exists $M>0$ such that for all $[\xi]\in \widehat{G}$ with $\jp{\xi}>M$ the following inequalities hold:
	\begin{equation}\label{DCn}
		\left|  e^{-\rho_m(\xi)q_0}-e^{\pm(\sigma_m(\xi)2\pi p_0 +s_0)}\right|\geq \jp{\xi}^{-M}
	\end{equation}
	for all $1 \leq m \leq d_\xi$.
\end{itemize}

\begin{theorem}\label{mainthm} Under the hypothesis a), b), c), d) and e) from above, the operator $P$ defined in (\ref{Pnonconstant}) is globally solvable.
\end{theorem}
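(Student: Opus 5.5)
We propose to reduce the problem to a family of $2\times 2$ periodic ODE systems via a partial Fourier series in the $G$-variable, solve each system explicitly, and then control the growth of the solutions with conditions c), d) and e). Given $f\in C^\infty(\mathbb{T}^1\times G)$, we write $\widehat{u}(t,\xi)_{k\ell}$ for the partial Fourier coefficient in $G$. We fix the representatives $\xi$ given by Definition \ref{diagonal}, so that $\sigma_D(\xi)$ is diagonal and $\sigma_D(\overline{\xi})=\overline{\sigma_D(\xi)}$.

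Next we repeat the computation leading to (\ref{subsystem}): take the equation at $\overline{\xi}$, conjugate it, and use (\ref{conj}). For $U(t)=\widehat{u}(t,\xi)_{k\ell}$ and $V(t)=\widehat{\overline{u}}(t,\xi)_{k\ell}$ this gives a linear system
$$\frac{d}{dt}\begin{pmatrix}U\\ V\end{pmatrix}=\Big[(p_0\sigma_k(\xi)+s(t))\,\textrm{Id}+q(t)\,B_k(\xi)\Big]\begin{pmatrix}U\\ V\end{pmatrix}+\begin{pmatrix}\widehat{f}(t,\xi)_{k\ell}\\ \widehat{\overline{f}}(t,\xi)_{k\ell}\end{pmatrix},$$
with the constant matrix
$$B_k(\xi)=\begin{pmatrix} i(\lambda\sigma_k+\delta) & \alpha\\ \overline{\alpha} & -i(\lambda\sigma_k+\delta)\end{pmatrix}.$$
The key observation is that the coefficient matrix is a scalar multiple of $\textrm{Id}$ plus $q(t)$ times a fixed matrix, so its values at different times commute. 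Moreover, $B_k(\xi)$ has eigenvalues $\pm\rho_k(\xi)$, and these are distinct by b). We therefore diagonalize $B_k(\xi)=S\,\textrm{diag}(\rho_k,-\rho_k)\,S^{-1}$, where $S^{-1}$ involves the factor $1/\rho_k$, and so $\|S\|\,\|S^{-1}\|$ is polynomially bounded by c) and (\ref{condsimbolo}). The system then decouples into two scalar periodic equations
$$w'=\big(p_0\sigma_k+s(t)\pm\rho_k\, q(t)\big)w+g_\pm(t).$$

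Each scalar equation has a unique $2\pi$-periodic solution exactly when $e^{2\pi p_0\sigma_k+s_0\pm\rho_k q_0}\neq 1$. In that case the solution is given by the usual formula
$$w(t)=\big(1-e^{\,\cdots}\big)^{-1}\int_0^{2\pi}\exp\Big(\int_{t-\tau}^{t}(\cdots)\Big)\,g(t-\tau)\,d\tau .$$
Since $\Re\rho_k\ge 0$ and $q\ge 0$, we use the forward formula for the $-\rho_k$ equation and the backward one (integrating over $[t,t+2\pi]$) for the $+\rho_k$ equation. With this choice the factor $e^{\pm\rho_k\int q}$ in the integrand has modulus $\le 1$. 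Multiplying the resonance denominator by the appropriate exponential turns it into exactly $|e^{-\rho_kq_0}-e^{\pm(2\pi p_0\sigma_k+s_0)}|$, which (DCn) bounds below by $\jp{\xi}^{-M}$ for $\jp{\xi}>M$.

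The remaining exponentials are controlled as follows. Factors $e^{i p_0 b_k t}$ are unimodular, factors involving $s$ are bounded uniformly, and $|e^{p_0 a_k t}|\le \jp{\xi}^{2\pi C|p_0|}$ by d). Hence $|U|+|V|\le C_N\jp{\xi}^{K}\sup_t\big(|\widehat f|+|\widehat{\overline f}|\big)$ with $K$ independent of $N$. Bounds on $t$-derivatives follow by differentiating the ODE, which costs only powers of $\jp{\xi}$ through (\ref{condsimbolo}). Since $f$ is smooth, its coefficients decay faster than any power, so the resulting $u$ is in $C^\infty(\mathbb{T}^1\times G)$.

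Two points need care, and I expect the first to be the main obstacle. First, the finitely many $\xi$ with $\jp{\xi}\le M$ are not covered by (DCn). For them we must check, using a), b) and $q_0>0$, that no resonance $e^{\pm\rho_kq_0}=e^{2\pi p_0\sigma_k+s_0}$ occurs. Failing that, one would have to show that the finitely many obstructed modes can still be solved smoothly, and the write-up must pin this down precisely. Second, we must check consistency: the solution at $\overline{\xi}$ must be compatible with the pair $(U,V)$ at $\xi$, namely $V=\overline{\widehat{u}(\cdot,\overline{\xi})}$. Uniqueness of the periodic solution, together with the conjugation symmetry of the system, gives this. Therefore the coefficients define a genuine function $u$ with $Pu=f$.
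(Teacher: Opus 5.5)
\textbf{Overlap with the paper.} Your architecture is the paper's. It takes partial Fourier coefficients in $G$ and gets a $2\times 2$ system with matrix $(p_0\sigma_m+s)\,\mathrm{Id}+q(t)\widetilde M$. It diagonalizes the constant matrix, with the $1/\rho_m$ loss controlled by c). It integrates the $+\rho_m$ component from the right and the $-\rho_m$ component from the left, so that the factors $e^{\pm\rho_m\int q}$ in the integrands have modulus at most $1$. Finally, (DCn) handles the denominators and d) handles $|e^{p_0a_mt}|$. The paper first strips the scalar part via $y=e^{-\sigma_m p_0 t-S(t)}w$ and uses Fa\`a di Bruno for derivatives; your ODE bootstrap is equivalent. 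Your compatibility check between $\xi$ and $\overline{\xi}$ is a point the paper leaves implicit.

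\textbf{The gap at low frequencies.} The real problem is the first point you flagged, and neither of your remedies works. Hypotheses a), b) and $q_0>0$ do not exclude resonance at $\jp{\xi}\le M$. A resonant mode also cannot be solved anyway. Its system then has a nonzero $2\pi$-periodic homogeneous solution, so by the Fredholm alternative $W\mapsto W'-A(t)W$ is not onto the smooth periodic functions. Meanwhile $\widehat f(\cdot,\xi)$ can be an arbitrary smooth periodic function.

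\textbf{Counterexample: the statement is false as written.} Take $G=\mathbb{T}^1$ and $D=\partial_x$, so $\sigma(k)=ik$ and $a_k=0$. Let $\lambda=\delta=0$, $\alpha=1$, $q\equiv 1$, $s\equiv -1$, and let $p_0$ be irrational and non-Liouville. Then $|\delta|\neq|\alpha|$, $\rho_k\equiv 1$, and d) is trivial. Also
\[
\left|e^{-\rho_kq_0}-e^{\pm(2\pi p_0\sigma_k+s_0)}\right|=\left|e^{-2\pi}-e^{\pm(2\pi i p_0 k-2\pi)}\right|
\]
is bounded below by $c\,|k|^{-\nu}$ for $k\neq 0$. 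Hence e) holds for a suitable $M$, and the only vanishing denominator is at $k=0$. But here $Pu=\partial_tu-p_0\partial_xu+u-\overline{u}=\partial_tu-p_0\partial_xu+2i\,\Im u$. So $\Re(Pu)=(\partial_t-p_0\partial_x)\Re u$ has zero mean on $\mathbb{T}^2$, and $f\equiv 1$ is not in the range.

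The paper's proof has the same gap. It claims that e) makes the coefficients of $z_1(\xi)_{mn}(2\pi)$ and $z_2(\xi)_{mn}(2\pi)$ in (\ref{z_1nobordo}) and (\ref{z_2nobordo}) nonzero, but this is only justified for $\jp{\xi}>M$.

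\textbf{What is needed.} An extra hypothesis is required, for instance $e^{-\rho_m(\xi)q_0}\neq e^{\pm(2\pi p_0\sigma_m(\xi)+s_0)}$ for all $[\xi]\in\widehat G$ and $1\le m\le d_\xi$. Only finitely many classes have $\jp{\xi}\le M$, so this gives a uniform positive lower bound there. With it, your argument, including the uniqueness-based compatibility step, goes through.
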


\begin{proof}
Suppose that $f \in C^\infty(\mathbb{T}^1 \times G)$ and that there exists $u \in \mathcal{D}'(\mathbb{T}^1 \times G)$ such that $Pu=f$ and fix $[\xi] \in \widehat{G}$. By taking the $[\xi]$-partial Fourier-coefficient on the equation $Pu=f$ we get
$$\partial_t \widehat{u}(t,\xi)-(p_0+i\lambda q(t))\sigma_D(\xi)\cdot \widehat{u}(t,\xi)-(s(t)+i\delta q(t))\widehat{u}(t,\xi)-\alpha q(t) \widehat{\overline{u}}(t,\xi)=\widehat{f}(t,\xi).$$
By taking the $mn$-entry of the above matrix, we use the hypothesis about $\sigma_D(\xi)$ to obtain
\begin{equation}
	\partial_t \widehat{u}(t,\xi)_{mn} -(p_0+i\lambda q(t))\sigma_m(\xi)\cdot \widehat{u}(t,\xi)_{mn}-(s(t)+i\delta q(t))\widehat{u}(t,\xi)_{mn}-\alpha q(t) \widehat{\overline{u}}(t,\xi)_{mn}=\widehat{f}(t,\xi)_{mn}
\end{equation}
Doing the analogue equation for the representation $\overline{\xi}$ and taking its complex conjugate we obtain
\begin{equation}
	\partial_t \widehat{\overline{u}}(t,\xi)_{mn}-(p_0-i\lambda q(t))\sigma_m(\xi)\cdot \widehat{\overline{u}}(t,\xi)_{mn}-(s(t)-i\delta q(t))\widehat{\overline{u}}(t,\xi)_{mn}-\overline{\alpha} q(t)\widehat{u}(t,\xi)_{mn}=\widehat{\overline{f}}(t,\xi)_{mn}. 
\end{equation}

Writing
$$w(\xi)_{mn}=\left[ \begin{array}{c} \widehat{u}(t,\xi)_{mn} \\ \widehat{\overline{u}}(t,\xi)_{mn} \end{array} \right], F(\xi)_{mn} =  \left[ \begin{array}{c} \widehat{f}(t,\xi)_{mn} \\ \widehat{\overline{f}}(t,\xi)_{mn} \end{array} \right],$$
the equations above mean that
\begin{equation}\label{edo} w(\xi)_{mn}' = M(\xi)_{mn} \cdot w(\xi)_{mn}+F(\xi)_{mn},
\end{equation}
where
$$M(\xi)_{mn}= \left[ \begin{array}{cc} (p_0+i\lambda q(t))\sigma_m(\xi)+(s(t)+i\delta q(t)) & \alpha q(t) \\ \overline{\alpha} q(t) & (p_0-i\lambda q(t))\sigma_m(\xi)+(s(t)-i\delta q(t)) \end{array}  \right] $$
Suppose that (\ref{edo}) has a smooth solution $w:\mathbb{T}^1 \rightarrow \mathbb{R}$ and define
$$y(\xi)_{mn}=e^{-\sigma_m(\xi)p_0 t-S(t)}\cdot w(\xi)_{mn},$$
where $S(t)=\int_0^t s(\tau)d\tau$. It follows from (\ref{edo}) that 
\begin{eqnarray*} y(\xi)_{mn}' & = & e^{-\sigma_m(\xi)p_0 t-S(t)} \cdot \left[M(\xi)_{mn}w(\xi)_{mn}+F(\xi)_{mn}\right] -\left[\sigma_m(\xi)p_0+s(t) \right]e^{-\sigma_m(\xi)p_0 t-S(t)}w(\xi)_{mn} \\
	& = & \left[ M(\xi)_{mn}-(\sigma_m(\xi)p_0+s(t))\cdot \textrm{Id}\right]y(\xi)_{mn}+e^{-\sigma_m(\xi)p_0 t-S(t)}\cdot F(\xi)_{mn}.
\end{eqnarray*}   
Note that
$$y(\xi)_{mn}(2\pi)=e^{-\sigma_m(\xi)p_0 2\pi -S(2\pi)}w(\xi)_{mn}(2\pi)=e^{-\sigma_m(\xi)p_0 2\pi-s_0}y(\xi)_{mn}(0),$$
so
$$y(\xi)_{mn}(0)=e^{\sigma_m(\xi)p_02\pi +s_0}\cdot y(\xi)_{mn}(2\pi).$$
But
$$M(\xi)_{mn}-(\sigma_m(\xi)p_0+s(t))\textrm{Id} =q(t) \cdot \widetilde{M}(\xi)_{mn},$$
where
$$\widetilde{M}(\xi)_{mn} = \left[ \begin{array}{cc} i(\lambda \sigma_m(\xi)+\delta) & \alpha \\ \overline{\alpha} & -i(\lambda \sigma_m(\xi)+\delta) \end{array}  \right] $$
The eigenvalues of $\widetilde{M}(\xi)_{mn}$ are $\pm \sqrt{|\alpha|^2-(\lambda \sigma_m(\xi)+\delta)^2}$. Condition b) guarantees that these eigenvalues are distinct. Recall that $\rho_m(\xi) \in \mathbb{C}$ is such that $\Re(\rho_m(\xi))\geq 0$ and $\rho_m(\xi)^2=|\alpha|^2-(\lambda \sigma_m(\xi)+\delta)^2$. If
$$V(\xi)_{mn}^+:= \left[ \begin{array}{c} \alpha \\ \rho_m(\xi)-i(\lambda \sigma_m(\xi)+\delta) \end{array} \right], V(\xi)_{mn}^-:= \left[ \begin{array}{c} \alpha \\ -\rho_m(\xi)-i(\lambda \sigma_m(\xi)+\delta) \end{array} \right],$$
then $V(\xi)_{mn}^+$ and $V(\xi)_{mn}^-$ are eigenvectors associated with $\rho_m(\xi)$ and $-\rho_m(\xi)$ respectively. 
If
$$T(\xi)_{mn} = \left[ \begin{array}{c|c} V(\xi)_{mn}^+ & V(\xi)_{mn}^- \end{array} \right],$$
then
\begin{equation} T(\xi)_{mn}^{-1} =\frac{1}{-2\alpha \rho_m(\xi)} \left[ \begin{array}{cr} -\rho_m(\xi)-i(\lambda \sigma_m(\xi)+\delta) & -\alpha \\ -\rho_m(\xi)+i(\lambda \sigma_m(\xi)+\delta) & \alpha \end{array} \right]
\end{equation}
and
\begin{equation}T(\xi)_{mn}^{-1}\widetilde{M}(\xi)_{mn}T(\xi)_{mn} = \rho_m(\xi) \cdot \left[ \begin{array}{cr} 1 & 0 \\ 0 & -1 \end{array}\right].\end{equation}

Coming back to the expression of $y(\xi)_{mn}'$ we get
$$y'(\xi)_{mn}'=q(t)\rho_m(\xi)T(\xi)_{mn} \cdot \left[ \begin{array}{cr} 1 & 0 \\ 0 & -1 \end{array}  \right] T(\xi)_{mn}^{-1} \cdot y(\xi)_{mn}+e^{-\sigma_m(\xi)p_0 t-S(t)}T(\xi)_{mn}T(\xi)_{mn}^{-1} F(\xi)_{mn}.$$
Defining $z(\xi)_{mn}=T(\xi)_{mn}^{-1} \cdot y(\xi)_{mn}$ and $G(\xi)_{mn}=T(\xi)_{mn}^{-1}F(\xi)_{mn}$, we get
\begin{eqnarray*} z(\xi)_{mn}'& = & T(\xi)_{mn}^{-1}\cdot y(\xi)_{mn}'\\
	& = & T(\xi)_{mn}^{-1} \left[q(t)\rho_m(\xi) T(\xi)_{mn} \left[ \begin{array}{cr} 1 & 0 \\ 0 & -1 \end{array}  \right] z(\xi)_{mn}+e^{-\sigma_m(\xi)p_0 t-S(t)}T(\xi)_{mn} G(\xi)_{mn}  \right] \\
	& = & q(t)\rho_m(\xi) \left[ \begin{array}{cr} 1 & 0 \\ 0 & -1 \end{array}  \right] \cdot z(\xi)_{mn}+e^{-\sigma_m(\xi)p_0 t-S(t)}G(\xi)_{mn} 
\end{eqnarray*} 
Moreover,
\begin{eqnarray}\label{znobordo} z(\xi)_{mn}(0) & = & T(\xi)_{mn}^{-1}\cdot y(\xi)_{mn}(0) \nonumber \\
	& = & e^{\sigma_m(\xi)p_0 2\pi+s_0}T(\xi)_{mn}^{-1}\cdot y(\xi)_{mn}(2\pi) \nonumber \\
	& = & e^{\sigma_m(\xi)p_0 2\pi +s_0}\cdot z(\xi)_{mn}(2\pi).
\end{eqnarray}
Component-wise we have
$$z_1(\xi)_{mn}'=\rho_m(\xi)q(t)z_1(\xi)_{mn}+e^{-\sigma_m(\xi)p_0 t-S(t)}G_1(\xi)_{mn}$$
and
$$z_2(\xi)_{mn}'=-\rho_m(\xi)q(t)z_2(\xi)_{mn}+e^{-\sigma_m(\xi)p_0 t-S(t)}G_2(\xi)_{mn}.$$
Its solutions are given by
\begin{eqnarray*}
	z_1(\xi)_{mn}& = & e^{\rho_m(\xi)\widetilde{Q}(t)}\left[ z_1(\xi)_{mn}(2\pi)+\int_{2\pi}^t e^{-\rho_m(\xi)\widetilde{Q}(\tau)}e^{-\sigma_m(\xi)p_0 \tau -S(\tau)}\cdot G_1(\xi)_{mn}d\tau \right] \\
	& = & -e^{\rho_m(\xi)\widetilde{Q}(t)}\int_t^{2\pi}e^{-\rho_m(\xi)\widetilde{Q}(\tau)}e^{-\sigma_m(\xi)p_0 \tau -S(\tau)}G_1(\xi)_{mn}d\tau+e^{\rho_m(\xi)\widetilde{Q}(t)}z_1(\xi)_{mn}(2\pi)
\end{eqnarray*}
where $\widetilde{Q}(t)=\int_{2\pi}^t q(\tau)d\tau = -\int_{t}^{2\pi} q(\tau)d\tau$, and
\begin{eqnarray*}
		z_2(\xi)_{mn}& = & e^{-\rho_m(\xi)Q(t)}\left[ z_2(\xi)_{mn}(0)+\int_0^t e^{\rho_m(\xi)Q(\tau)}e^{-\sigma_m(\xi)p_0 \tau -S(\tau)}\cdot G_2(\xi)_{mn}d\tau \right] \\
		& = & e^{-\rho_m(\xi)Q(t)}\int_0^t e^{\rho_m(\xi)Q(\tau)}e^{-\sigma_m(\xi)p_0 \tau -S(\tau)}G_2(\xi)_{mn}d\tau +e^{-\rho_m(\xi)Q(t)}z_2(\xi)_{mn}(0),
\end{eqnarray*}
where $Q(t)=\int_0^t q(\tau)d\tau$. Note that $Q$ and $\widetilde{Q}$ satisfy $\widetilde{Q}(0)=-q_0$, $\widetilde{Q}(2\pi)=0$, $Q(0)=0$ and $Q(2\pi)=q_0$. In particular, 
\begin{eqnarray*} z_1(\xi)_{mn}(0) & = & -e^{-\rho_m(\xi)q_0}\int_0^{2\pi}e^{-\rho_m(\xi)\widetilde{Q}(\tau)}e^{-\sigma_m(\xi)p_0 \tau -S(\tau)}G_1(\xi)_{mn}d\tau +e^{-\rho_m(\xi)q_0}z_1(\xi)_{mn}(2\pi)\\
	& = & -\int_0^{2\pi} e^{-\rho_m(\xi)(q_0+\widetilde{Q}(\tau))}e^{-\sigma_m(\xi)p_0 \tau-S(\tau)}G_1(\xi)_{mn}d\tau +e^{-\rho_m(\xi)q_0}z_1(\xi)_{mn}(2\pi).
\end{eqnarray*} 
By relation (\ref{znobordo}), we have
\begin{equation}\label{z_1nobordo}
	\left( e^{-\rho_m(\xi)q_0}-e^{\sigma_m(\xi)p_0 2\pi+s_0}\right)\cdot z_1(\xi)_{mn}(2\pi) = \int_0^{2\pi} e^{-\rho_m(\xi)(q_0+\widetilde{Q}(\tau))}e^{-\sigma_m(\xi)p_0 \tau-S(\tau)}G_1(\xi)_{mn}d\tau.
\end{equation}
Similarly, we have
$$z_2(\xi)_{mn}(2\pi)=e^{-\rho_m(\xi)q_0}\int_0^{2\pi} e^{\rho_m(\xi)Q(\tau)}e^{-\sigma_m(\xi)p_0 \tau -S(\tau)}G_2(\xi)_{mn}d\tau +e^{-\rho_m(\xi)q_0}z_2(\xi)_{mn}(0)$$
and again by using the relation (\ref{znobordo}) we get
$$z_2(\xi)_{mn}(2\pi)-e^{-\rho_m(\xi)q_0}e^{\sigma_m(\xi)p_0 2\pi+s_0}z(\xi)_{mn}(2\pi)=\int_0^{2\pi} e^{\rho_m(\xi)(-q_0+Q(\tau))}e^{-\sigma_m(\xi)p_0 \tau -S(\tau)}G_2(\xi)_{mn}d\tau,$$
so
\begin{equation}\label{z_2nobordo}
\left(1-e^{-\rho_m(\xi)q_0+\sigma_m(\xi)p_0 2\pi+s_0}  \right)\cdot z_2(\xi)_{mn}(2\pi) = \int_0^{2\pi} e^{\rho_m(\xi)(-q_0+Q(\tau))}e^{-\sigma_m(\xi)p_0 \tau -S(\tau)}G_2(\xi)_{mn}d\tau.
\end{equation}

Condition e) guarantees that the coefficients of $z_1(\xi)_{mn}(2\pi)$ and $z_2(\xi)_{mn}(2\pi)$ in the expressions above do not vanish.

 In this way, the solution $z(\xi)_{mn}$ is given by
 \begin{align}\label{solz_1} z_1(\xi)_{mn} = & -\int_t^{2\pi}e^{\rho_m(\xi)(\widetilde{Q}(t)-\widetilde{Q}(\tau))}e^{-\sigma_m(\xi)p_0 \tau -S(\tau)}G_1(\xi)_{mn}d\tau \nonumber \\
 	+ & e^{\rho_m(\xi)\widetilde{Q}(t)} \int_0^{2\pi} \frac{ e^{-\rho_m(\xi)(q_0+\widetilde{Q}(\tau))}e^{-\sigma_m(\xi)p_0 \tau-S(\tau)}G_1(\xi)_{mn}}{ e^{-\rho_m(\xi)q_0}-e^{\sigma_m(\xi)2\pi p_0 +s_0}}d\tau
 \end{align}
 and
 \begin{align}\label{solz_2}
 	z_2(\xi)_{mn} =& \int_0^t e^{\rho_m(\xi)(Q(\tau)-Q(t))}e^{-\sigma_m(\xi)p_0 \tau -S(\tau)}G_2(\xi)_{mn}d\tau \nonumber \\
 	+ & e^{-\rho_m(\xi)Q(t)} \int_0^{2\pi} \frac{e^{\rho_m(\xi)(-q_0+Q(\tau))}e^{-\sigma_m(\xi)p_0 \tau -S(\tau)}G_2(\xi)_{mn}}{e^{-\sigma_m 2\pi p_0-s_0}-e^{-\rho_m q_0}}d\tau
 \end{align}
 
 Now we must estimate the original solution. Recall that from the definitions of $z(\xi)_{mn}$ and $y(\xi)_{mn}$ we have
 \begin{eqnarray*} w(\xi)_{mn} & = & e^{\sigma_m(\xi)p_0 t+S(t)}\cdot y(\xi)_{mn}\\
   & = & e^{\sigma_m(\xi)p_0 t +S(t)} \cdot T(\xi)_{mn} \cdot z(\xi)_{mn}
 \end{eqnarray*} 
 so
 \begin{equation}\label{solutionu}
 	\widehat{u}(t,\xi)_{mn}  = \alpha e^{\sigma_m(\xi)p_0 t +S(t)}\cdot (z_1(\xi)_{mn}+z_2(\xi)_{mn}),
 \end{equation} 
 and then for all $L \in \mathbb{N}_0$ we have
 $$\frac{d^L}{dt^L} \widehat{u}(t,\xi)_{mn} = \alpha \sum_{k=0}^L \binom{L}{k} \frac{d^{L-k}}{dt^{L-k}} e^{\sigma_m(\xi) p_0 t +S(t)} \frac{d^k}{dt^k}(z_1(\xi)_{mn}+z_2(\xi)_{mn}).$$  
 For each $k=0,...,L$ we have
 \begin{eqnarray*} \frac{d^{L-k}}{dt^{L-k}} e^{\sigma_m(\xi) p_0 t +S(t)} & = & \sum_{n=0}^{L-k} \binom{L-k}{n} \frac{d^n}{dt^n} e^{S(t)} \frac{d^{L-k-n}}{dt^{L-k-n}} e^{\sigma_m(\xi) p_0 t} \\
 	& = & \sum_{n=0}^{L-k} \binom{L-k}{n} (\sigma_m(\xi) p_0)^{L-k-n} \frac{d^n}{dt^n} e^{S(t)}.
 \end{eqnarray*} 
 By Faà di Bruno formula, for all $n \in \mathbb{N}$ we have
 $$\frac{d^n}{dt^n} e^{S(t)} = e^{S(t)} \sum_{\gamma \in \Delta(n)} \frac{n!}{\gamma!} \prod_{\ell=1}^n \left( \frac{1}{\ell!} \frac{d^\ell}{dt^\ell} S(t)\right)^{\gamma_\ell},$$
 so for all $t \in [0,2\pi]$ we have
 \begin{eqnarray*}
 	\left| \frac{d^{L-k}}{dt^{L-k}} e^{\sigma_m(\xi)p_0 t+S(t)} \right| & = & \left| \sum_{n=0}^{L-k} \binom{L-k}{n} (\sigma_m(\xi) p_0)^{L-k-n} e^{S(t)} \sum_{\gamma \in \Delta(n)} \frac{n!}{\gamma!} \prod_{\ell=1}^n \left( \frac{1}{\ell!} \frac{d^\ell}{dt^\ell} S(t)\right)^{\gamma_\ell} \right|  \\ 
	& \leq & e^{S(t)} \sum_{n=0}^{L-k} \binom{L-k}{n} |\sigma_m(\xi)|^{L-k-n} |p_0|^{L-k-n} \sum_{\gamma \in \Delta(n)} \frac{n!}{\gamma!} \prod_{\ell=1}^n \left| \frac{1}{\ell!} \frac{d^\ell}{dt^\ell} S(t)\right|^{\gamma_\ell}
 \end{eqnarray*}
 Taking the supremum in $t \in [0,2\pi]$ on the inequality above and using (\ref{condsimbolo}), it follows that there is a constant $C>0$ depending only on $L$, $k$ and the coefficients of the operator such that
 \begin{equation}\label{exp_simbolo}
 \left| \frac{d^{L-k}}{dt^{L-k}} e^{\sigma_m(\xi)p_0 t+S(t)} \right| \leq C\jp{\xi}^{K(L-k)}
 \end{equation} 
 
 From $G(\xi)_{mn}=T(\xi)_{mn}^{-1} \cdot F(\xi)_{mn}$ we get
 $$G(\xi)_{mn}=\frac{1}{-2\alpha \rho_m(\xi)} \left[ \begin{array}{cr} -\rho_m(\xi)-i(\lambda \sigma_m(\xi)+\delta) & -\alpha \\ -\rho_m(\xi)+i(\lambda \sigma_m(\xi)+\delta) & \alpha \end{array} \right] \cdot \left[ \begin{array}{c} \widehat{f}(t,\xi)_{mn} \\ \widehat{\overline{f}}(t,\xi)_{mn} \end{array} \right].$$
 Since $1/|\rho_m|$ (assumption c) ), $|\rho_m|$ and $|\sigma_m|$ are of at most polynomial growth and $f$ is smooth, we conclude that $G(\xi)_{mn}$ is smooth. So it is enough to prove that $z_1(\xi)_{mn}$ and $z_2(\xi)_{mn}$ are rapidly decreasing. Since their expressions are similar, we will deal only with $z_1(\xi)_{mn}$, since the estimates for $z_2(\xi)_{mn}$ are totally analogous. We will start by estimating the term
 $$-e^{\rho_m \widetilde{Q}(t)}\int_t^{2\pi}e^{-\rho_m(\xi) \widetilde{Q}(\tau))}e^{-\sigma_m(\xi)p_0 \tau -S(\tau)}G_1(\xi)_{mn}d\tau.$$
 Fixing $L \in \mathbb{N}_0$, for each $k=0,1...,L$ we have
 \begin{eqnarray*}
 	& & \frac{d^k}{dt^k} \left(-e^{\rho_m \widetilde{Q}(t)}\int_t^{2\pi}e^{-\rho_m(\xi) \widetilde{Q}(\tau)}e^{-\sigma_m(\xi)p_0 \tau -S(\tau)}G_1(\xi)_{mn}d\tau \right) \\
 	& = & -\sum_{n=0}^k \binom{k}{n} \frac{d^{k-n}}{dt^{k-n}} e^{\rho_m \widetilde{Q}(t)} \frac{d^n}{dt^n} \left( \int_t^{2\pi}e^{-\rho_m(\xi) \widetilde{Q}(\tau)}e^{-\sigma_m(\xi)p_0 \tau -S(\tau)}G_1(\xi)_{mn}d\tau \right) \\
 	& = & - \left( \frac{d^k}{dt^k} e^{\rho_m\widetilde{Q}(t)} \right) \int_t^{2\pi}e^{-\rho_m(\xi) \widetilde{Q}(\tau)}e^{-\sigma_m(\xi)p_0 \tau -S(\tau)}G_1(\xi)_{mn}d\tau \\
 	& + &\sum_{k=1}^n \binom{k}{n} \frac{d^{k-n}}{dt^{k-n}} e^{\rho_m \widetilde{Q}(t)} \frac{d^{n-1}}{dt^{n-1}} \left(e^{-\rho_m(\xi) \widetilde{Q}(t)}e^{-\sigma_m(\xi)p_0 t -S(t)}G_1(\xi)_{mn} \right)  
 \end{eqnarray*}
 Again by Faà di Bruno formula, for all $N \in \mathbb{N}$ we have
 \begin{align}\label{bruno_1}
 	\frac{d^N}{dt^N} e^{\rho_m \widetilde{Q}(t)} & =  e^{\rho_m \widetilde{Q}(t)} \sum_{\gamma \in \Delta(N)} \frac{N!}{\gamma!} \prod_{\ell=1}^N \left(\frac{1}{\ell!} \frac{d^\ell}{dt^{\ell}} \rho_m \widetilde{Q}(t) \right)^{\gamma_\ell} \nonumber \\
 	 & = e^{\rho_m \widetilde{Q}(t)} \sum_{\gamma \in \Delta(N)} \rho_m^{\gamma_1+...+\gamma_N} \frac{N!}{\gamma!}\prod_{\ell=1}^N \left(\frac{1}{\ell!} \frac{d^\ell}{dt^{\ell}} \widetilde{Q}(t) \right)^{\gamma_\ell}
 \end{align}
 Since $\gamma_1+...+\gamma_N \leq N$ for all $\gamma \in \Delta(N)$, we get
 \begin{align}\label{bruno_2}
 	\left| 	\frac{d^N}{dt^N} e^{\rho_m \widetilde{Q}(t)} \right| & \leq e^{\Re(\rho_m) \widetilde{Q}(t)} \sum_{\gamma \in \Delta(N)} |\rho_m|^N \frac{N!}{\gamma!}\prod_{\ell=1}^N \frac{1}{\ell!}\left| \frac{d^\ell}{dt^{\ell}} \widetilde{Q}(t) \right|^{\gamma_\ell} \nonumber \\
 	& \leq C e^{\Re(\rho_m)\widetilde{Q}(t)} \jp{\xi}^{NK} 
 \end{align}
 where $C>0$ is a constant that depends only on the operator and $N$. From (\ref{bruno_1}) we get
 \begin{eqnarray*}
  & - & \left( \frac{d^k}{dt^k} e^{\rho_m\widetilde{Q}(t)} \right) \int_t^{2\pi}e^{-\rho_m(\xi) \widetilde{Q}(\tau)}e^{-\sigma_m(\xi)p_0 \tau -S(\tau)}G_1(\xi)_{mn}d\tau\\
  & = & -\sum_{\gamma \in \Delta(k)} \frac{k!}{\gamma!} \prod_{\ell=1}^k \left( \frac{1}{\ell!} \frac{d^\ell}{dt^{\ell}} \widetilde{Q}(t)\right)^{\gamma_\ell} \rho_m^{\gamma_1+...+\gamma_k} \int_t^{2\pi} e^{\rho_m(\widetilde{Q}(t)-\widetilde{Q}(\tau))}e^{-\sigma_m(\xi)p_0 \tau-S(\tau)}G_1(\xi)_{mn}d\tau  
 \end{eqnarray*}
 Also notice that since we are supposing $q\geq 0$, for $\tau \in [t,2\pi]$ we have
 \begin{eqnarray*}
 	\widehat{Q}(t)-\widetilde{Q}(\tau) & = & -\int_t^{2\pi} q+\int_\tau^{2\pi} q\\
 	& = & -\int_t^\tau q -\int_\tau^{2\pi}q + \int_\tau^{2\pi} q \\
 	& = & -\int_t^\tau q \leq 0
 \end{eqnarray*}
 and since $\Re(\rho_m)\geq 0$, we get $e^{\Re(\rho_m)(\widetilde{Q}(t)-\widetilde{Q}(\tau))}\leq 1 $, so there is a constant $\widetilde{C}>0$ which depends only on $k$ and the operator such that
 
 \begin{equation} \left| \left( \frac{d^k}{dt^k} e^{\rho_m\widetilde{Q}(t)} \right) \int_t^{2\pi}e^{-\rho_m(\xi) \widetilde{Q}(\tau)}e^{-\sigma_m(\xi)p_0 \tau -S(\tau)}G_1(\xi)_{mn}d\tau \right|
   \leq \widetilde{C}  \jp{\xi}^{k\cdot K} e^{2 \pi|p_0|\cdot |a_m| } \sup_{t \in [0,2\pi]} |G_1(\xi)_{mn}|.  
 \end{equation}
 By condition (\ref{log}) and the fact that $G_1(\xi)_{mn}$ is smooth, we conclude that the term
 $$\left| \left( \frac{d^k}{dt^k} e^{\rho_m\widetilde{Q}(t)} \right) \int_t^{2\pi}e^{-\rho_m(\xi) \widetilde{Q}(\tau)}e^{-\sigma_m(\xi)p_0 \tau -S(\tau)}G_1(\xi)_{mn}d\tau \right|$$
 is rapidly decreasing. Now
 \begin{align*}
 	& \frac{d^{n-1}}{dt^{n-1}} \left(e^{-\rho_m(\xi) \widetilde{Q}(t)}e^{-\sigma_m(\xi)p_0 t -S(t)}G_1(\xi)_{mn} \right)   \\
 	& =  \sum_{r=0}^{n-1}\binom{n-1}{r} \frac{d^r}{dt^r}  e^{-\rho_m(\xi) \widetilde{Q}(t)}e^{-\sigma_m(\xi)p_0 t -S(t)} \frac{d^{n-1-r}}{dt^{n-1-r}}G_1(\xi)_{mn}  \\
    & = \sum_{r=0}^{n-1} \binom{n-1}{r} \sum_{w=0}^r \binom{r}{w} \frac{d^w}{dt^w} e^{-\sigma_m p_0 t-S(t)} \frac{d^{r-w}}{dt^{r-w}} e^{-\rho_m \widetilde{Q}(t)} \frac{d^{n-1-r}}{dt^{n-1-r}}G_1(\xi)_{mn}. 
 \end{align*}
 so by inequalities (\ref{exp_simbolo}) and (\ref{bruno_2}) we have
 \begin{align}
 	& \left| \frac{d^{n-1}}{dt^{n-1}} \left(e^{-\rho_m(\xi) \widetilde{Q}(t)}e^{-\sigma_m(\xi)p_0 t -S(t)}G_1(\xi)_{mn} \right) \right| \nonumber \\
 	& \leq \sum_{r=0}^{n-1} \binom{n-1}{r} \sum_{w=0}^r \binom{r}{w} \left|\frac{d^w}{dt^w} e^{-\sigma_m p_0 t-S(t)} \right| \left| \frac{d^{r-w}}{dt^{r-w}} e^{-\rho_m \widetilde{Q}(t)}\right| \left|\frac{d^{n-1-r}}{dt^{n-1-r}}G_1(\xi)_{mn} \right| \nonumber \\
 	& \leq \widetilde{C} \sum_{r=0}^{n-1} \binom{n-1}{r} \sum_{w=0}^r \binom{r}{w} \jp{\xi}^{Kw} e^{-\Re(\rho_m)\widetilde{Q}(t)} \jp{\xi}^{K(r-w)} \sup_{\substack{t \in [0,2\pi] \\ m\leq n-1}} \left|\frac{d^{m}}{dt^{m}}G_1(\xi)_{mn} \right| \nonumber \\
 	& \leq C e^{-\Re(\rho_m)\widetilde{Q}(t)} \jp{\xi}^{K(n-1)}\sup_{\substack{t \in [0,2\pi] \\ m\leq n-1}} \left|\frac{d^{m}}{dt^{m}}G_1(\xi)_{mn} \right|,
 \end{align}
 and we conclude that
 \begin{align}
 	& \left| \sum_{k=1}^n \binom{k}{n} \frac{d^{k-n}}{dt^{k-n}} e^{\rho_m \widetilde{Q}(t)} \frac{d^{n-1}}{dt^{n-1}} \left(e^{-\rho_m(\xi) \widetilde{Q}(t)}e^{-\sigma_m(\xi)p_0 t -S(t)}G_1(\xi)_{mn} \right) \right| \nonumber \\
 	& \leq \sum_{k=1}^n \binom{k}{n} \left| \frac{d^{k-n}}{dt^{k-n}} e^{\rho_m \widetilde{Q}(t)}\right| \left|  \frac{d^{n-1}}{dt^{n-1}} \left(e^{-\rho_m(\xi) \widetilde{Q}(t)}e^{-\sigma_m(\xi)p_0 t -S(t)}G_1(\xi)_{mn} \right) \right| \nonumber \\
 	& \leq C \sum_{k=1}^n \binom{k}{n} e^{\Re(\rho_m) \widetilde{Q}(t)} \jp{\xi}^{(k-n)K} e^{-\Re(\rho_m)\widetilde{Q}(t)} \jp{\xi}^{K(n-1)}\sup_{\substack{t \in [0,2\pi] \\ m\leq n-1}} \left|\frac{d^{m}}{dt^{m}}G_1(\xi)_{mn} \right| \nonumber \\
    & \leq C \jp{\xi}^{K(k-1)} \sup_{\substack{t \in [0,2\pi] \\ m\leq n-1}} \left|\frac{d^{m}}{dt^{m}}G_1(\xi)_{mn} \right|
 \end{align}
 where $C>0$ is a constant that depends only on $n$ and the operator. Again since $G_1(\xi)_{mn}$ is smooth, we conclude that 
 $$\left| -e^{\rho_m \widetilde{Q}(t)}\int_t^{2\pi}e^{-\rho_m(\xi) \widetilde{Q}(\tau))}e^{-\sigma_m(\xi)p_0 \tau -S(\tau)}G_1(\xi)_{mn}d\tau \right|$$
 is rapidly decreasing. Finally, we need to estimate the term
 \begin{equation}\label{lastterm}
 e^{\rho_m(\xi)\widetilde{Q}(t)} \int_0^{2\pi} \frac{ e^{-\rho_m(\xi)(q_0+\widetilde{Q}(\tau))}e^{-\sigma_m(\xi)p_0 \tau-S(\tau)}G_1(\xi)_{mn}}{ e^{-\rho_m(\xi)q_0}-e^{\sigma_m(\xi)2\pi p_0 +s_0}}d\tau.
 \end{equation} 
 Since $q \geq 0$ it is easy to see that $q_0+\widetilde{Q}(\tau)\geq 0$ for all $ \tau \in [0,2\pi]$. Combining this fact with (\ref{bruno_2}), conditions (\ref{log}) and (\ref{DCn}),  and the fact that $G_1(\xi)_{mn}$ is smooth, it is clear the term (\ref{lastterm}) is rapidly decreasing. Similar estimates can be done to prove that $z_2(\xi)_{mn}$ is also rapidly decreasing, so the sequence $\widehat{u}(\xi)_{mn}$ indeed define a smooth function $u$ such that $Pu=f$.
 \end{proof} 
 
 Now we want to explore some consequences of Theorem \ref{mainthm}. Conditions c) and e) are more difficult to deal with since is not easy to estimate those expressions in general. The ideia is to find classes of examples such that, for example, $|\rho_m(\xi)|$ and $|e^{-\rho_m(\xi)q_0}-e^{\pm(\sigma_m(\xi)2\pi p_0+s_0)}|$ are bounded from below by a positive constant. 
 
 \begin{example} Suppose that $\lambda=0$, $|\alpha|>\delta>0$, $p_0>0$ and fix a compact Lie group $H$ such that $\widehat{H}=\{[\xi_j]; j \in \mathbb{N}\}$ and $[\xi_j]=[\overline{\xi_j}]$ for all $j \in \mathbb{N}$ (for example, take $H=(\mathbb{S}^3)^r$) and choose an increasing sequence $(a_j)_j$ of positive real numbers such that $|a_j| \leq \log \jp{\xi_j}$ for all $j\in \mathbb{N}$ and $\lim_j a_j=\infty$. Let $P: \mathcal{D}'(H) \rightarrow \mathcal{D}'(H)$ the left-invariant continuous operator such that $\sigma_P(\xi_j) = a_j \cdot \textrm{Id}_{d_{\xi_j}}$ for all $j \in \mathbb{N}$. Now consider the compact Lie group $G=H \times \mathbb{T}^1$ and the operator $D=P+i \cdot D_{t_2}^r$, $r \in \mathbb{N}$, acting on $G$ with variables $(x,t_2)$. In this case we have $\widehat{G} \equiv \widehat{H} \times \mathbb{Z}$ and it is easy to see that for each $(j,k) \in \widehat{G}$ we have $\sigma_D(\xi_j,k)=(a_j+ik)\cdot \textrm{Id}_{d_{\xi_j}}$, so $D$ is a diagonal operator. Also, since $\lambda=0$ and $|\alpha|>\delta$, we have $\rho=\sqrt{|\alpha|^2-\delta^2}>0$. Additionally, we assume that $a_j \neq (-s_0\pm \rho q_0)/2\pi p_0$ for all $j$. In this way, there exists $C>0$ such that 
 	$$|e^{-\rho q_0} -e^{\pm(2\pi p_0 a_j+s_0)}| \geq C$$
 	for all $j$. By Theorem \ref{mainthm} the Vekua-type operator $$P=\partial_t-p_0(P+iD_{t_2}^r)-(s(t)+i\delta q(t))u-\alpha q(t)\overline{u}$$ 
 is globally solvable on $\mathbb{T}^2 \times H$.
 	
 \end{example} 
 
 \begin{example} Suppose that $\lambda =0$ and $|\alpha|>\delta>0$ and $p_0 >0$. Let $G$ be a compact Lie group such that $\widehat{G}=\{[\xi_j]; j \in \mathbb{N}\}\cup \{[\overline{\xi_j}]; j \in \mathbb{N}\}$, with $[\xi_n]\neq [\overline{\xi_n}]$ for all $n$ and $[\xi_n] \neq [\xi_m]$ for all $n \neq m$. Let $(c_n)_n$ be a sequence of complex numbers such that there exists $C>0$ and $r \in \mathbb{N}$ with $|\Re(c_n)|\leq \log \jp{\xi_n}$ and $|c_n| \leq C \jp{\xi_n}^r$ for all $n\in \mathbb{N}$. We also suppose that $(\Re(c_n))_n$ is an increasing sequence of positive numbers such that $\lim_n \Re(c_n)=\infty$. Consider the left-invariant continuous operator $D=\mathcal{D}'(G) \rightarrow \mathcal{D}'(G)$ such that $\sigma_D(\xi_n)=c_n \cdot \textrm{Id}_{d_{\xi_n}}$ and $\sigma_D(\overline{\xi_n}) = \overline{c_n}\cdot \textrm{Id}_{d_{\xi_n}}$ for all $n\in \mathbb{N}$. Then $D$ is a diagonal operator. In this case we have $\rho=\sqrt{|\alpha|^2-\delta^2}>0$. Additionally, suppose that $a_n \neq (-s_0\pm \rho q_0)/2\pi p_0$ for all $n$. Then, as in the last example, there exists $\widehat{C}>0$ such that
 	$$|e^{-\rho q_0} - e^{\pm( 2\pi p_0 c_n+s_0)}| \geq \widehat{C}$$
 for all $n\in \mathbb{N}$. By Theorem \ref{mainthm} the operator
 $$P=\partial_t -p_0 D -(s(t)+i\delta q(t))u - \alpha q(t)\overline{u}$$
 is globally solvable on $\mathbb{T}^1 \times G$.
 	
 \end{example}

\bibliography{references}


\begin{thebibliography}{19}
\ifx \bisbn   \undefined \def \bisbn  #1{ISBN #1}\fi
\ifx \binits  \undefined \def \binits#1{#1}\fi
\ifx \bauthor  \undefined \def \bauthor#1{#1}\fi
\ifx \batitle  \undefined \def \batitle#1{#1}\fi
\ifx \bjtitle  \undefined \def \bjtitle#1{#1}\fi
\ifx \bvolume  \undefined \def \bvolume#1{\textbf{#1}}\fi
\ifx \byear  \undefined \def \byear#1{#1}\fi
\ifx \bissue  \undefined \def \bissue#1{#1}\fi
\ifx \bfpage  \undefined \def \bfpage#1{#1}\fi
\ifx \blpage  \undefined \def \blpage #1{#1}\fi
\ifx \burl  \undefined \def \burl#1{\textsf{#1}}\fi
\ifx \doiurl  \undefined \def \doiurl#1{\url{https://doi.org/#1}}\fi
\ifx \betal  \undefined \def \betal{\textit{et al.}}\fi
\ifx \binstitute  \undefined \def \binstitute#1{#1}\fi
\ifx \binstitutionaled  \undefined \def \binstitutionaled#1{#1}\fi
\ifx \bctitle  \undefined \def \bctitle#1{#1}\fi
\ifx \beditor  \undefined \def \beditor#1{#1}\fi
\ifx \bpublisher  \undefined \def \bpublisher#1{#1}\fi
\ifx \bbtitle  \undefined \def \bbtitle#1{#1}\fi
\ifx \bedition  \undefined \def \bedition#1{#1}\fi
\ifx \bseriesno  \undefined \def \bseriesno#1{#1}\fi
\ifx \blocation  \undefined \def \blocation#1{#1}\fi
\ifx \bsertitle  \undefined \def \bsertitle#1{#1}\fi
\ifx \bsnm \undefined \def \bsnm#1{#1}\fi
\ifx \bsuffix \undefined \def \bsuffix#1{#1}\fi
\ifx \bparticle \undefined \def \bparticle#1{#1}\fi
\ifx \barticle \undefined \def \barticle#1{#1}\fi
\bibcommenthead
\ifx \bconfdate \undefined \def \bconfdate #1{#1}\fi
\ifx \botherref \undefined \def \botherref #1{#1}\fi
\ifx \url \undefined \def \url#1{\textsf{#1}}\fi
\ifx \bchapter \undefined \def \bchapter#1{#1}\fi
\ifx \bbook \undefined \def \bbook#1{#1}\fi
\ifx \bcomment \undefined \def \bcomment#1{#1}\fi
\ifx \oauthor \undefined \def \oauthor#1{#1}\fi
\ifx \citeauthoryear \undefined \def \citeauthoryear#1{#1}\fi
\ifx \endbibitem  \undefined \def \endbibitem {}\fi
\ifx \bconflocation  \undefined \def \bconflocation#1{#1}\fi
\ifx \arxivurl  \undefined \def \arxivurl#1{\textsf{#1}}\fi
\csname PreBibitemsHook\endcsname

\bibitem[\protect\citeauthoryear{Greenfield and Wallach}{1972}]{GW1972_pams}
\begin{barticle}
\bauthor{\bsnm{Greenfield}, \binits{S.J.}},
\bauthor{\bsnm{Wallach}, \binits{N.R.}}:
\batitle{Global hypoellipticity and {Liouville} numbers}.
\bjtitle{Proc. Am. Math. Soc.}
\bvolume{31},
\bfpage{112}--\blpage{114}
(\byear{1972})
\doiurl{10.2307/2038523}
\end{barticle}
\endbibitem

\bibitem[\protect\citeauthoryear{Greenfield and Wallach}{1973a}]{GW1973_tams}
\begin{barticle}
\bauthor{\bsnm{Greenfield}, \binits{S.J.}},
\bauthor{\bsnm{Wallach}, \binits{N.R.}}:
\batitle{Remarks on global hypoellipticity}.
\bjtitle{Trans. Am. Math. Soc.}
\bvolume{183},
\bfpage{153}--\blpage{164}
(\byear{1973})
\doiurl{10.2307/1996464}
\end{barticle}
\endbibitem

\bibitem[\protect\citeauthoryear{Greenfield and
  Wallach}{1973b}]{greenfield1973globally}
\begin{barticle}
\bauthor{\bsnm{Greenfield}, \binits{S.J.}},
\bauthor{\bsnm{Wallach}, \binits{N.R.}}:
\batitle{Globally hypoelliptic vector fields}.
\bjtitle{Topology}
\bvolume{12}(\bissue{3}),
\bfpage{247}--\blpage{253}
(\byear{1973})
\end{barticle}
\endbibitem

\bibitem[\protect\citeauthoryear{Forni}{}]{Forni08_cont-math}
\begin{botherref}
\oauthor{\bsnm{Forni}, \binits{G.}}:
On the greenfield-wallach and katok conjectures in dimension three. geometric
  and probabilistic structures in dynamics, 197213.
Contemp. Math
\textbf{469}
\end{botherref}
\endbibitem

\bibitem[\protect\citeauthoryear{Cardoso and Hounie}{1977}]{CH1977_pams}
\begin{barticle}
\bauthor{\bsnm{Cardoso}, \binits{F.}},
\bauthor{\bsnm{Hounie}, \binits{J.}}:
\batitle{Global solvability of an abstract complex}.
\bjtitle{Proc. Am. Math. Soc.}
\bvolume{65},
\bfpage{117}--\blpage{124}
(\byear{1977})
\doiurl{10.2307/2042004}
\end{barticle}
\endbibitem

\bibitem[\protect\citeauthoryear{Bergamasco and Petronilho}{1999}]{BP1999_jmaa}
\begin{barticle}
\bauthor{\bsnm{Bergamasco}, \binits{A.P.}},
\bauthor{\bsnm{Petronilho}, \binits{G.}}:
\batitle{Global solvability of a class of involutive systems}.
\bjtitle{J. Math. Anal. Appl.}
\bvolume{233}(\bissue{1}),
\bfpage{314}--\blpage{327}
(\byear{1999})
\doiurl{10.1006/jmaa.1999.6310}
\end{barticle}
\endbibitem

\bibitem[\protect\citeauthoryear{da~Silva and Meziani}{2016}]{DM2016_mana}
\begin{barticle}
\bauthor{\bsnm{Silva}, \binits{P.L.D.}},
\bauthor{\bsnm{Meziani}, \binits{A.}}:
\batitle{Cohomology relative to a system of closed forms on the torus}.
\bjtitle{Math. Nachr.}
\bvolume{289}(\bissue{17-18}),
\bfpage{2147}--\blpage{2158}
(\byear{2016})
\doiurl{10.1002/mana.201500293}
\end{barticle}
\endbibitem

\bibitem[\protect\citeauthoryear{de~{\'A}vila~Silva and
  Kirilov}{2019}]{AK2019_jst}
\begin{barticle}
\bauthor{\bsnm{{\'A}vila~Silva}, \binits{F.}},
\bauthor{\bsnm{Kirilov}, \binits{A.}}:
\batitle{Perturbations of globally hypoelliptic operators on closed manifolds}.
\bjtitle{J. Spectr. Theory}
\bvolume{9}(\bissue{3}),
\bfpage{825}--\blpage{855}
(\byear{2019})
\doiurl{10.4171/JST/264}
\end{barticle}
\endbibitem

\bibitem[\protect\citeauthoryear{Kirilov et~al.}{2021}]{KMP2021_jde}
\begin{barticle}
\bauthor{\bsnm{Kirilov}, \binits{A.}},
\bauthor{\bsnm{Paleari}, \binits{R.}},
\bauthor{\bsnm{Moraes}, \binits{W.A.A.}}:
\batitle{Global analytic hypoellipticity for a class of evolution operators on
  {{\(\mathbb{T}^1 \times \mathbb{S}^3\)}}}.
\bjtitle{J. Differ. Equations}
\bvolume{296},
\bfpage{699}--\blpage{723}
(\byear{2021})
\doiurl{10.1016/j.jde.2021.06.013}
\end{barticle}
\endbibitem

\bibitem[\protect\citeauthoryear{Vekua}{}]{vekua2014generalized}
\begin{botherref}
\oauthor{\bsnm{Vekua}, \binits{I.N.}}:
Generalized Analytic Functions
vol. 25,
Elsevier.
\doiurl{10.1016/C2013-0-05289-9}
\end{botherref}
\endbibitem

\bibitem[\protect\citeauthoryear{Bergamasco
  et~al.}{2014}]{bergamasco2014solvability}
\begin{barticle}
\bauthor{\bsnm{Bergamasco}, \binits{A.P.}},
\bauthor{\bsnm{Silva}, \binits{P.D.}},
\bauthor{\bsnm{Meziani}, \binits{A.}}:
\batitle{Solvability of a first order differential operator on the two-torus}.
\bjtitle{Journal of Mathematical Analysis and Applications}
\bvolume{416}(\bissue{1}),
\bfpage{166}--\blpage{180}
(\byear{2014})
\end{barticle}
\endbibitem

\bibitem[\protect\citeauthoryear{de~Almeida and Dattori~da
  Silva}{2021}]{de2021solvability}
\begin{barticle}
\bauthor{\bsnm{Almeida}, \binits{M.F.}},
\bauthor{\bsnm{Silva}, \binits{P.L.}}:
\batitle{Solvability of a class of first order differential operators on the
  torus}.
\bjtitle{Results in Mathematics}
\bvolume{76}(\bissue{2}),
\bfpage{104}
(\byear{2021})
\end{barticle}
\endbibitem

\bibitem[\protect\citeauthoryear{Ruzhansky and Turunen}{}]{RT2010_book}
\begin{botherref}
\oauthor{\bsnm{Ruzhansky}, \binits{M.}},
\oauthor{\bsnm{Turunen}, \binits{V.}}:
Pseudo-differential Operators and Symmetries: Background Analysis and Advanced
  Topics
vol. 2,
Springer Science \& Business Media.
\doiurl{10.1007/978-3-7643-8514-9}
\end{botherref}
\endbibitem

\bibitem[\protect\citeauthoryear{Kirilov et~al.}{2020}]{KMR2020_bsm}
\begin{barticle}
\bauthor{\bsnm{Kirilov}, \binits{A.}},
\bauthor{\bsnm{Moraes}, \binits{W.A.A.}},
\bauthor{\bsnm{Ruzhansky}, \binits{M.}}:
\batitle{Partial {Fourier} series on compact {Lie} groups}.
\bjtitle{Bull. Sci. Math.}
\bvolume{160},
\bfpage{27}
(\byear{2020})
\doiurl{10.1016/j.bulsci.2020.102853} .
\bcomment{Id/No 102853}
\end{barticle}
\endbibitem

\bibitem[\protect\citeauthoryear{Kirilov et~al.}{2021}]{KMR2021_jfa}
\begin{barticle}
\bauthor{\bsnm{Kirilov}, \binits{A.}},
\bauthor{\bsnm{Moraes}, \binits{W.A.A.}},
\bauthor{\bsnm{Ruzhansky}, \binits{M.}}:
\batitle{Global hypoellipticity and global solvability for vector fields on
  compact {Lie} groups}.
\bjtitle{J. Funct. Anal.}
\bvolume{280}(\bissue{2}),
\bfpage{39}
(\byear{2021})
\doiurl{10.1016/j.jfa.2020.108806} .
\bcomment{Id/No 108806}
\end{barticle}
\endbibitem

\bibitem[\protect\citeauthoryear{Kirilov et~al.}{2024}]{KKM2024}
\begin{barticle}
\bauthor{\bsnm{Kirilov}, \binits{A.}},
\bauthor{\bsnm{Kowacs}, \binits{A.P.}},
\bauthor{\bsnm{Moraes}, \binits{W.A.A.}}:
\batitle{Global solvability and hypoellipticity for evolution operators on tori
  and spheres}.
\bjtitle{Math. Nachr.}
\bvolume{297}(\bissue{12}),
\bfpage{4605}--\blpage{4650}
(\byear{2024})
\doiurl{10.1002/mana.202300506}
\end{barticle}
\endbibitem

\bibitem[\protect\citeauthoryear{de~Moraes}{2022}]{de2022regularity}
\begin{barticle}
\bauthor{\bsnm{Moraes}, \binits{W.A.A.}}:
\batitle{Regularity of solutions to a vekua-type equation on compact lie
  groups}.
\bjtitle{Annali di Matematica Pura ed Applicata (1923-)}
\bvolume{201}(\bissue{1}),
\bfpage{379}--\blpage{401}
(\byear{2022})
\end{barticle}
\endbibitem

\bibitem[\protect\citeauthoryear{Kirilov et~al.}{2026}]{kirilov2026solvability}
\begin{botherref}
\oauthor{\bsnm{Kirilov}, \binits{A.}},
\oauthor{\bsnm{Moraes}, \binits{W.A.A.}},
\oauthor{\bsnm{Tokoro}, \binits{P.M.}}:
Solvability of a class of evolution operators on compact lie groups.
arXiv preprint arXiv:2602.15203
(2026)
\end{botherref}
\endbibitem

\bibitem[\protect\citeauthoryear{da~Silva et~al.}{2025}]{da2025diagonal}
\begin{barticle}
\bauthor{\bsnm{Silva}, \binits{P.D.}},
\bauthor{\bsnm{Kirilov}, \binits{A.}},
\bauthor{\bsnm{Silva}, \binits{R.P.}}:
\batitle{Diagonal systems of differential operators on compact lie groups}.
\bjtitle{Results in Mathematics}
\bvolume{80}(\bissue{6}),
\bfpage{191}
(\byear{2025})
\doiurl{10.1007/s00025-025-02506-2}
\end{barticle}
\endbibitem

\end{thebibliography}

\end{document}